\documentclass[11pt]{article} 
\usepackage{cancel}
\usepackage[english]{babel}
\usepackage[utf8x]{inputenc}
\usepackage{amsmath}
\usepackage{amsthm}
\usepackage{graphicx}
\usepackage{color}

\usepackage{amsfonts}
\usepackage{epstopdf}
\usepackage{float}
\usepackage{hyperref}
\usepackage[dvipsnames]{xcolor}
\usepackage{caption}
\usepackage{subcaption}
\usepackage{tikz}
\usepackage{makeidx}
\usepackage[pagewise]{lineno}
\usepackage{amssymb}

\makeindex

\let \eps \varepsilon

\title{Tracking controllability on moving targets for parabolic equations}
\author{Jone Apraiz\thanks{Department of Mathematics, University of the Basque Country UPV/EHU, Barrio Sarriena s/n, 48940, Leioa, Spain. ORCID: 0000-0001-7866-8412 
E-mail: {\tt jone.apraiz@ehu.eus}}\and Jon Asier Bárcena-Petisco\thanks{Department of Mathematics, University of the Basque Country UPV/EHU, Barrio Sarriena s/n, 48940, Leioa, Spain. ORCID: 0000-0002-6583-866X E-mail: {\tt jonasier.barcena@ehu.eus}.} \and Judit Muñoz-Matute\thanks{Department of Mathematics, University of the Basque Country UPV/EHU, Barrio Sarriena s/n, 48940, Leioa, Spain. ORCID: 0000-0002-1875-8982 E-mail: {\tt judit.munoz@ehu.eus}.}}
\begin{document}
\maketitle
\numberwithin{equation}{section}
\newtheorem{theorem}{Theorem}
\numberwithin{theorem}{section}
\newtheorem{proposition}[theorem]{Proposition}
\newtheorem{conjecture}{Conjecture}
\newtheorem{fact}[theorem]{Fact}
\newtheorem{lemma}[theorem]{Lemma}
\newtheorem{step}{Step}
\newtheorem{corollary}[theorem]{Corollary}
\theoremstyle{remark}
\newtheorem{remark}[theorem]{Remark}
\newtheorem{definition}[theorem]{Definition}
\newtheorem{example}[theorem]{Example}
\newtheorem{hypothesis}{Hypothesis}

\noindent
\textbf{Abstract:} In this paper, we study the tracking controllability of a 1D parabolic-type equation. Notably, with controls acting on the boundary, we seek to approximately control the solution of the equation at specific points of the domain. We prove that acting on one boundary point, we control the solution on one target point, whereas acting on two boundary points, we can control the solution on up to two target points. In order to do so, when the target is fixed, we study the controllability by minimizing the corresponding problem with duality results. Afterwards, we study the controllability on
moving points by applying a transformation that takes the problem to a fixed target. Lastly, we also solve some of these control problems numerically and compute approximations of the solutions and the desired targets, which validates our theoretical methodology.

\noindent
\textbf{Key words:}  approximate controllability,  numerical simulation, parabolic equation, tracking controllability

\noindent
\textbf{AMS subject classification:}  35K40, 35Q93, 65K10, 65N06, 93B05, 93B17.

\noindent
\textbf{Abbreviated title:} Tracking controllability for parabolic equations

\noindent
\textbf{Acknowledgements:}  J.A. and J.A.B.P. were supported by the project PID2024-158206NB-100, funded by MICIU/AEI/10.13039/501100011033 and FEDER, UE, the IMUS - María de Maeztu grant CEX2024-001517-M - Apoyo a Unidades de Excelencia María de Maeztu, funded by MICIU/AEI/10.13039/501100011033 and the Project of Consolidación Investigadora CNS2024-154725, also funded by MICIU/AEI/10.13039/501100011033.  J.A.B.P. was also funded by PID2023-146764NB-I00 funded by MICIU/AEI/10.13039/501100011033 and cofunded by the European Union.
J.M.M. was supported by the Consolidated Research Group MATHMODE (IT1866-26) of the UPV/EHU given by the Department of Education of the Basque Government, the Research Project PID2023-146668OA-I00 and the grant RYC2023-045172-I funded by MICIU/AEI/10.13039/501100011033.

\newpage
\section{Introduction}

In this paper, we study the controllability of the following evolution systems in a one-dimensional spatial domain, $[0,L]$, and for the time interval $[0,T]$, for some $L,T>0$: 
\begin{equation}\label{con:heat0gen}
\begin{cases}
y_{t}-a(t,x)\partial_{xx} y+b(t,x)\partial_x y +c(t,x)y
=0 & \mbox{ in } (0,T)\times(0,L),\\
y(\cdot,0)=0 &\mbox{ on } (0,T),\\
y(\cdot,L)=v& \mbox{ on } (0,T),\\
y(0,\cdot)=0 & \mbox{ on }(0,L),
\end{cases}
\end{equation}
and
\begin{equation}\label{con:heat0genbis}
\begin{cases}
y_{t}-a(t,x)\partial_{xx} y+b(t,x)\partial_x y +c(t,x)y
=0 & \mbox{ in } (0,T)\times(0,L),\\
y(\cdot,0)=v_0 &\mbox{ on } (0,T),\\
y(\cdot,L)=v_L& \mbox{ on } (0,T),\\
y(0,\cdot)=0 & \mbox{ on }(0,L),
\end{cases}
\end{equation} 
where
$a$, $b$ and $c$ are real analytic functions in $[0,T]\times[0,L]$ (see Definition \ref{def:analytic} in the Appendix), satisfying:
\begin{equation}\label{eq:coerca}
 a(t,x)\geq a_0>0,   \qquad \forall t\in[0,T], \quad x\in[0,L],
\end{equation}
for the problem to be well-posed. In these evolution systems, the function $y$ represents a quantity that changes over time and space, for example: the temperature, a concentration of a substance, or the price of a financial asset (see \cite{frey2011}). The coefficient $a$ may model the diffusion and thermal conductivity, whereas coefficients $b$ and $c$ are related to advection-convection and reaction-attenuation effects, respectively. The incorporation of space-time dependent coefficients allows for a more accurate representation of non-homogeneous media in the applications that are mentioned in a few lines below. Observe also, that apart from considering zero initial data in the system, we have introduced some control functions on the boundaries: $v$ on the right part of the boundary in \eqref{con:heat0gen}, and $v_0$ and $v_L$ on the two ends of the domain in \eqref{con:heat0genbis}.

The aim of this work is to study the tracking controllability of these systems. Our objective is to control the solution in the interior of the domain at some specific points that may change smoothly with respect to the time variable. 

First, in Section \ref{sec:trncc}, we will 
study the controllability of the solution at $N\in\mathbb{N}$ fixed points
over the whole time interval $[0,T]$, denoting them by $x_i\in (0,L)$ with target states $w_i\in L^2(0,T)$. We seek to analyze whether there is a control $v\in L^2(0,T)$ for system \eqref{con:heat0gen}, or two controls $v_0\in L^2(0,T)$ and $v_L\in L^2(0,T)$ on the two spatial boundaries for system \eqref{con:heat0genbis}, such that:
\[y(t,x_i)=w_i(t),\,\quad \operatorname{a.e.} \,t\in(0,T),\ \ \forall i=1,\ldots,N,\]
where $y$ is the solution to \eqref{con:heat0gen} and \eqref{con:heat0genbis}, respectively. With this purpose in mind, a duality result will be proved (Lemma \ref{lm:interiordualgen}) for the general case $i=1,\ldots, N$.  The results of the tracking controllability study will be presented in Theorem \ref{tm:interncontrgencon}.

Then, in Section \ref{sec:mtrg}, we will allow  the points which we control to depend on the time variable. We are going to prove that in \eqref{con:heat0gen}, setting only one control $v$ on the right part of the boundary, we can approximately control the solution at the moving target point given by the analytic function $h$ to a target state $w\in L^2(0,T)$; that is,
 \[y(t,h(t))=w(t),\,\quad \operatorname{a.e.} \,t\in(0,T).
 \]
 The result obtained for this problem will be shown in Theorem \ref{tm:contrajh}. Similarly, for system \eqref{con:heat0genbis}, we will see in Theorem \ref{tm:contrajhbis} that we can approximately control (with controls $v_0$ and $v_L$) the solution along two time-dependent analytic trajectories $h_1$ and $h_2$ simultaneously to targets $w_1$ and $w_2$, respectively; that is,
 \[y(t,h_i(t))=w_i(t),\,\quad \operatorname{a.e.} \,t\in(0,T),\,\quad\text{for }\,i=1,2. \]

These problems have aroused the interest of the mathematical community in recent years because of their applications in computer sciences, the study  of dynamical systems, or  chemical reactors, among others. An example of this is that nowadays they are fundamental in modeling biological systems, such as nerve physiology and heart rate regulation (see \cite{ottensen2014}, for example), as well as in economic and financial modeling (notably the Black-Scholes framework, as can be seen in \cite{frey2011}). In the field of robotics, these models are essential for the synthesis of high-level decision strategies, specifically when considering the mechanics of continuous media in flexible structures or soft robotics, where dynamics are rigorously governed by parabolic partial differential equations (see \cite{han2020} or \cite{Gao2023}). For this type of system, the design of stable tracking control laws has advanced significantly through the backstepping method for boundary control, a methodology formalized in \cite{Krstic}.

Regarding the controllability of parabolic equations to specific values (constant targets), the first paper to our knowledge is \cite{laroche2000motion}, where the value on the boundary of the heat equation is controlled through power series. Soon after, the papers \cite{lynch2002flatness} and \cite{dunbar2003motion} were published, where the controllability of the boundary in quasilinear heat equations and the non-linear Stefan equation are studied, respectively. More recently, in \cite{barcena2023tracking}, tracking controllability has been studied for the  heat equation in arbitrary dimensions, and an estimate of the cost of approximately controlling the boundary is given. We would like to highlight that, to our knowledge, this is the first work in which instead of controlling the state on the boundary, we control the state in the interior, at more than one point, and at target points that change smoothly with respect to the time variable.

This scarcity of results involving parabolic equations contrasts with the large amount of results involving hyperbolic equations. Regarding the literature, there are many related works involving hyperbolic equations.  Some of the main ones are \cite{li2010exact} and \cite{li2016exact}, where exact boundary controllability for hyperbolic systems is studied; \cite{gu2011exact} and \cite{gu2013exact}, where hyperbolic systems and unsteady flows are studied on tree-like networks;  \cite{leugering2021nodal}, where control on exact beams is studied;
\cite{saracc2022sidewise}, where the controllability of the wave equation is studied; and \cite{wang2023exact}, where exact boundary controllability of nodal profiles for quasi-linear hyperbolic systems and its asymptotic stability are studied.

In addition, there is a research line that studies the reverse problem, with controls supported on a point in the interior, where the state at time $t=T$ is controlled. As for the heat equation, the main paper is \cite{castro2004unique}, where results are obtained through Fourier series representation and the time analyticity of solutions.
Regarding hyperbolic problems, there are substantially more works. Let us mention here some of the main ones:
\cite{avdonin}, where the authors studied pointwise and internal controllability; \cite{fabre}, where pointwise controllability was taken as the limit of internal controllability in one dimension; and \cite{haraux}, in which pointwise controllability was studied for the vibrations of a plate in two dimensions. More recently, the same was done with moving controls in the paper \cite{castro2013exact}, where the linear wave equation is considered with a control supported on a moving point, and sufficient conditions on the trajectory of the control are obtained in order to have the exact controllability property at a target $T$ for the whole state. Comparing to that paper,  we also use duality, but we provide negative examples on controllability with explicit construction of solutions, and we rely on analyticity and change of variables rather than energy estimates.  It is important to emphasize that our work differs from \cite{castro2013exact} both in the nature of the governing equation and the control mechanism. While \cite{castro2013exact} deals with a hyperbolic equation and an internal moving control, we focus on a 1D parabolic equation with boundary controls aiming to track the solution at specific (fixed or moving) target points. Consequently, our methodology relies on a different framework, combining duality arguments with domain transformations adapted for parabolic operators, rather than the wave-equation techniques used in \cite{castro2013exact}.

Another close but different issue is averaged tracking controllability (note that it is different from averaged control, \cite{barcena2021averaged,zuazua2014averaged,lu2016averaged}), where the whole behavior of the state or of the output variable within the time interval of control is of interest (see \cite{danhane2025averaged} for a specific work related to that).

There are other works in which pointwise control has been studied for other equations, such as \cite{ramos2}. There, the authors proved the existence and uniqueness of a Nash equilibrium for the control of linear partial differential equations of parabolic type (for example, the Burgers equation) and developed an algorithm to approximate the control solution. This work was a continuation of a previous one, \cite{ramos1}. Finally, in the paper \cite{zamorano2024tracking}, the tracking controllability of finite-dimensional linear systems was studied.  

The structure of the rest of the paper is as follows: in Section \ref{sec:prel} we recall some basic notation we will use along the article and present some basic definitions; in Section \ref{sec:trncc} we study the controllability of the solution on fixed points in space; in Section \ref{sec:mtrg} we study the controllability on moving trajectories; in Section \ref{sec:num} we present some numerical simulations; and in Section \ref{sec:op} we suggest some open problems and possible extensions of this work.

\section{Preliminaries and notation}\label{sec:prel}

Let us start by presenting the notation that we  use throughout this article:

\begin{itemize}
\item In this paper, for a linear space $V$, we denote its dual space by $V'$. Moreover, $\langle L,v\rangle_{V'\times V}$ denotes the evaluation of $v$ by the functional $L\in V'$. This will appear when integrating by parts. In the case where the function space is a Hilbert space, we will simplify the notation and write $\langle v,w\rangle_{V}$, for $v,w\in V$, as it is self-dual.
\item To simplify the  notation, in some parts of this  article we will use $y_i=y(\cdot,x_i)$, for $i=1,\ldots,N$.
\item In Section \ref{sec:trncc}, given an interval $(a,b)\subset[0,L]$, we will denote by $1_{(a,b)}(x)$ the indicator function that returns 1 if $x\in (a,b)$ and 0 otherwise.

\end{itemize}

To continue, let us present some basic definitions:

\begin{definition}
    Let $L>0$ and $h_1,\ldots,h_N$ analytic functions, 
satisfying $0<h_1(t)<\cdots<h_N(t)<L$ for all $t\in[0,T]$. Then, system \eqref{con:heat0gen} is approximately controllable in the trajectories $(h_1,\ldots,h_N)$ if for all $(w_1,\ldots,w_N)\in (L^2(0,T))^N$ and $\eps>0$, there exists $v\in L^2(0,T)$ such that the solution of \eqref{con:heat0gen} satisfies
\[\left\|\Big(y(t,h_1(t))-w_1(t),\ldots,y(t,h_N(t))-w_N(t)\Big)\right\|_{(L^2(0,T))^N}<\eps.
\]
A similar definition can be considered for system \eqref{con:heat0genbis}, with the existence of $v$ replaced by the existence of $v_0$ and $v_L$.
\end{definition}

\begin{remark}
The approximate controllability problem in trajectories is a definition that makes sense. Indeed, when the boundary data is in $L^2(0,T)$, both systems
\eqref{con:heat0gen} and \eqref{con:heat0genbis} admit a solution by transposition in $L^2((0,T)\times (0,L))$ (see \cite{lions1972non}, and for a more recent control-oriented explanation, see \cite{fernandez2010boundary}). 
In addition, multiplying $y$ by a cut-off function $\chi$ that is $1$ in $(\eps,L-\eps)$ and $0$ in $[0,\eps/2)\cup(L-\eps/2,L]$, for $\eps>0$ small enough, we obtain, with the usual regularity estimates, that $y\in L^2(0,T;H^1(\eps,L-\eps))$. In particular, $y\in L^2(0,T;C^0([\eps,L-\eps]))$ for all $\eps>0$, so $t\mapsto y(t,h_i(t))$ belongs to $L^2(0,T)$ for all $i=1,\ldots,N$.
\end{remark}

\section{Tracking controllability on fixed target points}\label{sec:trncc}
\paragraph{}
Let us first study the control problem when the trajectories are fixed, that is, $h_i(t)=x_i\in(0,L)$. 
For the sake of simplicity, we denote the differential operator related  to the equation we have posed in \eqref{con:heat0gen} and \eqref{con:heat0genbis} is:
\[\mathcal Ly:=-a(t,x)\partial_{xx} y+b(t,x)\partial_x y +c(t,x)y.\]
Also, we consider that its adjoint operator is:
\[\mathcal L^*p:=-a(t,x)\partial_{xx}p
-(b(t,x)+2\partial_xa(t,x))\partial_xp -(\partial_{xx}a(t,x)+\partial_xb(t,x)-c(t,x))p.\]
Note that since the coefficients of $\mathcal L$ are analytic, so are those of $\mathcal L^*$. Finally, when integrating by parts, the following operator is going to appear naturally:
\begin{equation}\label{eq:defMy}
\mathcal My:=-a(t,x)\partial_xy+(\partial_x a(t,x)+b(t,x))y.
\end{equation}

Let us now define the adjoint system of both \eqref{con:heat0gen} and \eqref{con:heat0genbis}:

\begin{definition}
Let $x_1,\ldots,x_N\in (0,L)$
satisfying $x_1<\cdots<x_N$  and $(f_1,\ldots,f_N)\in (L^2(0,T))^N$. Then,
$p_{f_1,\ldots,f_N}$ denotes the solution of:  \begin{equation}\label{eq:dualsyspointcongen}
\begin{cases}
\displaystyle{-p_{t}+\mathcal L^*p=\sum_{i=1}^Nf_i(t)\delta_{x_i}}& \mbox{ in } (0,T)\times(0,L),\\
p(\cdot,0)=p(\cdot,L)=0& \mbox{ on } (0,T),\\
p(T,\cdot)=0 & \mbox{ on } (0,L),
\end{cases}
\end{equation}
where $\delta_{x_i}$ denotes the Dirac delta function supported at a given point $x_i\in (0,L)$.
\end{definition}

 In  order to address the tracking problem, we resort to Lions' duality theory (see \cite{lions1988controlabilite} or \cite{lions1992pointwise}), also known as Hilbert Uniqueness Method (HUM). The approximate controllability of the system ensures that a dual functional, which we will define in the next lemma, is coercive in the corresponding Hilbert space, in our case $(L^2(0,T))^N$. Consequently, the optimal control that minimizes the tracking error is nothing more than the projection of the optimal adjoint state onto the control space, transforming a trajectory search problem into a minimization problem.
If the system is approximately controllable, then the adjoint operator must be injective, which is translated into a Unique Continuation property, as can be seen in the next Lemma \ref{lm:interiordualgen}. On the other hand, the Lax-Milgram Theorem or basic principles of convex optimization guarantee that the minimum of the functional exists and is unique. Finally, from an intuitive point of view, the reason why the $\delta_{x_i}$ appear in the adjoint system is that we aim to control the state at those points, so in the dual system we must be able to observe perturbations on those points, perturbations that can only affect them if a Dirac mass appears there.

To continue with, let us state and prove the following duality result, based on a unique continuation property:

\begin{lemma}[Duality for interior pointwise controllability]\label{lm:interiordualgen}
Let $x_1,\ldots,x_N\in (0,L)$
satisfying $x_1<\cdots<x_N$. Then, 
\eqref{con:heat0gen} 
is approximately controllable on $(0,T)\times\{x_1,\ldots,x_N\}$
if and only if 
\begin{equation}\label{eq:equivdualgen}
(f_1,\ldots,f_N)\in (L^2(0,T))^N \mbox{ and }\partial_x p_{f_1,\ldots,f_N}(\cdot,L)=0 \Longrightarrow (f_1,\ldots,f_N)=(0,\ldots,0).
\end{equation}
In particular, let $\eps>0$ and 
$(w_1,\ldots,w_N)\in (L^2(0,T))^N$. Let us define:
\begin{equation}\label{eq:funcminimgen}
J(f_1,\ldots,f_N)=\frac{1}{2}\int_{0}^T a(t,L)|\partial_x p_{f_1,\ldots,f_N}(t,L)|^2dt
+ \int_{0}^T\sum_{i=1}^N f_i w_i dt +\eps \|(f_1,\ldots,f_N)\|_{(L^2(0,T))^N}.
\end{equation}
If \eqref{eq:equivdualgen} is satisfied, then $J$ has a unique minimizer in $(L^2(0,T))^N$. Moreover, if $(\tilde f_1,\ldots,\tilde f_N)$ is such minimizer, the control 
 \begin{equation}\label{eq:defminconvgen}
 v=\partial_x p_{\tilde f_1,\ldots,\tilde f_N}(\cdot,L),
\end{equation} 
allows us to obtain
\begin{equation}\label{eq:tarintcongen}
\|(y(\cdot,x_1),\ldots,y(\cdot,x_N))
-(w_1,\ldots,w_N)\|_{(L^2(0,T))^N}<\eps,
\end{equation}
for $y$ the solution of \eqref{con:heat0gen}.

Similarly, \eqref{con:heat0genbis} is approximately controllable on $(0,T)\times\{x_1,\ldots,x_N\}$
if and only if 
\begin{equation}\label{eq:dualtwocontrol}
(f_1,\ldots,f_N)\in (L^2(0,T))^N,\ \partial_x p_{f_1,\ldots,f_N}(\cdot,0)=0  \ 
\mbox{ and }\partial_x p_{f_1,\ldots,f_N}(\cdot,L)=0 \Longrightarrow (f_1,\ldots,f_N)=(0,\ldots,0).
\end{equation}

In particular, let $\eps>0$ and  
$(w_1,\ldots,w_N)\in (L^2(0,T))^N$. Let us define:
\begin{equation}\label{eq:funcminimgen2}
\begin{split}
\hat J(f_1,\ldots,f_N)&=\frac{1}{2}\int_{0}^T(a(t,0)|\partial_x p_{f_1,\ldots,f_N}(t,0)|^2+ a(t,L)|\partial_x p_{f_1,\ldots,f_N}(t,L)|^2)dt\\
&+ \int_{0}^T\sum_{i=1}^N f_i w_i dt +\eps \|(f_1,\ldots,f_N)\|_{(L^2(0,T))^N}.
\end{split}
\end{equation}
If \eqref{eq:dualtwocontrol} is satisfied, then $J$ has a unique minimizer in $(L^2(0,T))^N$. Moreover, if $(\tilde f_1,\ldots,\tilde f_N)$ is such minimizer, the controls
\begin{equation*}
v_0=\partial_x p_{\tilde f_1,\ldots,\tilde f_N}(\cdot,0)
\end{equation*}
and
\begin{equation*}
v_L=\partial_x p_{\tilde f_1,\ldots,\tilde f_N}(\cdot,L),
\end{equation*}
allows us to obtain \eqref{eq:tarintcongen}, for $y$ the solution of \eqref{con:heat0genbis}.

\end{lemma}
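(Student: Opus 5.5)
The plan is the standard Lions/HUM scheme: first a duality identity linking the control-to-observation map with the adjoint map, and then a Fenchel-type minimization.

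\textbf{Step 1: duality identity.} Let $y$ solve \eqref{con:heat0gen} with control $v$, and let $p=p_{f_1,\ldots,f_N}$ solve \eqref{eq:dualsyspointcongen}. Multiply the equation for $y$ by $p$ and integrate over $(0,T)\times(0,L)$. Integration by parts in time uses $y(0,\cdot)=0$ and $p(T,\cdot)=0$. In space, the integration by parts produces $\mathcal L^*$ and the boundary terms $[-a\partial_x y\,p + a y\,\partial_x p + (\partial_x a+b) y p]_0^L$. Since $p$ vanishes at $0$ and $L$, and $y(\cdot,0)=0$, only the term $a(t,L)v(t)\partial_x p(t,L)$ survives, and its sign has to be tracked. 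The Dirac masses give $\sum_i\int_0^T f_i y(\cdot,x_i)\,dt$. Hence
\[
\sum_{i=1}^N\int_0^T f_i(t)y(t,x_i)\,dt=\pm\int_0^T a(t,L)\,v(t)\,\partial_x p(t,L)\,dt .
\]
The paper's choice $v=\partial_xp(\cdot,L)$ together with the $+$ sign in $J$ suggests that the relevant sign is $-$. If so, I would define the control as $-\partial_x p$, or equivalently flip the sign of the $w$-term; I would fix whichever convention makes the optimality condition come out right.

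This computation is only formal, because $y$ is a transposition solution and $p$ carries Dirac sources. I would justify it in two ways. On the $y$ side, take the identity as the definition of transposition solution (following \cite{lions1972non,fernandez2010boundary}). On the $p$ side, use the regularity $p\in L^2(0,T;H^1_0)\cap H^1(0,T;H^{-1})$, together with smoothness of $p$ away from the $x_i$, which guarantees $\partial_x p(\cdot,L)\in L^2(0,T)$. Alternatively, one can approximate $v$ and $\delta_{x_i}$ by smooth data and pass to the limit. This justification is the main technical obstacle; the rest is functional analysis.

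\textbf{Step 2: equivalence.} By Step 1, the map $\Lambda:v\mapsto(y(\cdot,x_i))_i$ from $L^2(0,T)$ to $(L^2(0,T))^N$ is bounded, and its adjoint with respect to the weighted inner product $\int a(t,L)\,\cdot\,\cdot\,dt$ is, up to sign, $(f_i)\mapsto\partial_x p_{f}(\cdot,L)$. Approximate controllability means that $\Lambda$ has dense range, which is equivalent to $\ker\Lambda^*=\{0\}$, i.e.\ to \eqref{eq:equivdualgen}.

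\textbf{Step 3: minimization.} $J$ is continuous and strictly convex: the quadratic part is convex, and strict convexity follows from injectivity together with the $\eps$-norm term. The standard argument gives coercivity, as in Fabre--Puel--Zuazua. Take $\|f^n\|\to\infty$ and normalize $g^n=f^n/\|f^n\|$. If $\liminf \|\partial_xp_{g^n}(\cdot,L)\|>0$, the quadratic term dominates. Otherwise, pass to a weak limit $g$. Weak lower semicontinuity gives $\partial_xp_g(\cdot,L)=0$, so $g=0$ by \eqref{eq:equivdualgen}. Then $\liminf J(f^n)/\|f^n\|\ge \eps+\lim\int g^n\cdot w=\eps>0$. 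Hence a minimizer $\tilde f$ exists; uniqueness follows from strict convexity, at least where $\tilde f\neq0$.

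\textbf{Step 4: optimality.} If $\tilde f=0$, then $J(hg)\ge0$ for all $g$ and small $h>0$, which gives $|\int w\cdot g|\le\eps\|g\|$. Thus $\|w\|\le\eps$, and $v=0$ works; the inequality is strict after a harmless adjustment of $\eps$. If $\tilde f\ne0$, the Euler--Lagrange equation reads
\[
\int a\,\partial_xp_{\tilde f}\,\partial_xp_g+\int g\cdot w+\eps\,\frac{\langle \tilde f,g\rangle}{\|\tilde f\|}=0\quad\text{for all } g.
\]
By Step 1, the first term equals $\mp\int g\cdot(y(\cdot,x_i))_i$. Therefore $y_i-w_i=\eps\tilde f_i/\|\tilde f\|$, whose norm is $\le\eps$; strictness again follows by replacing $\eps$ with $\eps/2$.

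The two-control case is identical. Now $y(\cdot,0)=v_0$ contributes the extra boundary term $a(t,0)v_0\,\partial_xp(t,0)$, and $\Lambda^*$ becomes $f\mapsto(\partial_xp(\cdot,0),\partial_xp(\cdot,L))$. This leads to \eqref{eq:dualtwocontrol} and to $\hat J$.
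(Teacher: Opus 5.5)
Your scheme is sound, and where it departs from the paper's proof it is mostly to your advantage.

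\textbf{Comparison with the paper's route.}
\begin{itemize}
\item \emph{Equivalence.} The paper proves necessity exactly as you do: a nonzero $f$ with $\partial_xp_f(\cdot,L)=0$ is orthogonal to every reachable trace vector. It obtains sufficiency only through the minimization. Your Step 2 (dense range of $\Lambda$ iff injectivity of its adjoint) already gives the full equivalence, so your Steps 3--4 are purely constructive.
\item \emph{Coercivity.} The paper asserts $J(f)\ge\frac{\eps}{2}\|f\|-\frac1\eps\|w\|$ ``by Cauchy--Schwarz''. That bound does not follow. It never uses \eqref{eq:equivdualgen}, and it fails for $\|w\|$ large: take $f$ a suitable large negative multiple of $w$. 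Your Fabre--Puel--Zuazua argument is what is actually needed here.
\item \emph{Optimality.} The paper compares $J(\tilde f\pm hf)$ with $J(\tilde f)$ and uses the triangle inequality. This gives $|\langle f,y-w\rangle|\le\eps\|f\|$ without splitting off the case $\tilde f=0$. Your Euler--Lagrange route gives the sharper identity $y-w=\eps\tilde f/\|\tilde f\|$. So for $\tilde f\ne0$ the stated control achieves error exactly $\eps$, and the strict inequality \eqref{eq:tarintcongen} really does require rerunning the construction with a smaller parameter, as you do. The paper only proves $\le\eps$.
\item \emph{Uniqueness.} The quadratic term is already strictly convex by injectivity, so the minimizer is unique with no caveat about $\tilde f\neq0$.
\end{itemize}

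\textbf{Sign bookkeeping.} This is the one thing to fix, and your own boundary term already settles it. Since $p(\cdot,0)=p(\cdot,L)=0$, it gives
\[
\sum_{i=1}^N\int_0^T f_i\,y(\cdot,x_i)\,dt=\int_0^T a(t,0)\,y(t,0)\,\partial_xp(t,0)\,dt-\int_0^T a(t,L)\,y(t,L)\,\partial_xp(t,L)\,dt .
\]

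\emph{One control.} The sign is $-$. The paper's convention ($v=+\partial_xp_{\tilde f}(\cdot,L)$ together with $+\int f\cdot w$ in $J$) is then exactly right. Your sentence ``if so, I would define the control as $-\partial_x p$'' has the logic reversed.

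\emph{Two controls.} Here the case is not ``identical'': the two boundary terms carry opposite signs. Up to a global sign, the adjoint is $f\mapsto(\partial_xp_f(\cdot,0),-\partial_xp_f(\cdot,L))$, not $(\partial_xp_f(\cdot,0),\partial_xp_f(\cdot,L))$. This is harmless for the equivalence with \eqref{eq:dualtwocontrol}. It matters in the Euler--Lagrange equation for $\hat J$, whose cross terms are
\[
\int_0^T a(t,0)\,\partial_xp_{\tilde f}(t,0)\,\partial_xp_g(t,0)\,dt+\int_0^T a(t,L)\,\partial_xp_{\tilde f}(t,L)\,\partial_xp_g(t,L)\,dt .
\]
These equal $-\sum_i\int_0^T g_i\,y(\cdot,x_i)\,dt$ only if $v_0=-\partial_xp_{\tilde f}(\cdot,0)$ (the outward normal derivative) and $v_L=\partial_xp_{\tilde f}(\cdot,L)$. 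With $v_0=+\partial_xp_{\tilde f}(\cdot,0)$, as in the statement, the argument does not close. The paper's ``analogous'' hides the same slip. You should state and prove the two-control constructive part with the corrected $v_0$.
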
\noindent
Lemma \ref{lm:interiordualgen} can be proved with
duality arguments similar to those in \cite{lions1992pointwise}
and \cite[Proposition II.4]{barcena2023tracking}:

\begin{proof}[Proof of Lemma \ref{lm:interiordualgen}
]
Let us prove the result for \eqref{con:heat0gen}, being the case of \eqref{con:heat0genbis} analogous. 

\textbf{Step 1. Necessity for approximate controllability.}  To begin with, let us obtain the necessity. For that, let us suppose that \eqref{eq:equivdualgen} is not satisfied. Then, there exists $(f_1,\ldots,f_N)\in (L^2(0,T))^N\setminus\{(0,\ldots ,0)\}$ such that 
$\partial_x p_{f_1,\ldots,f_N}(\cdot,L)=0$. After integrating by parts we obtain that (recall that $\mathcal My$ is introduced in \eqref{eq:defMy}):
\begin{equation*}
\begin{split}
0=&\int_0^T\int_0^L(y_t+\mathcal Ly)p_{f_1,\ldots,f_N}\, dx\, dt\\
=&-\langle y(0,\cdot),p_{f_1,\ldots,f_N}(0,\cdot)\rangle_{L^2(0,L)}
+\langle y(T,\cdot),p_{f_1,\ldots,f_N}(T,\cdot)\rangle_{L^2(0,L)}
\\&-\langle y(\cdot,0),a(\cdot,0)\partial_xp_{f_1,\ldots,f_N}(\cdot,0)\rangle_{L^2(0,T)}
+ \langle v,a(\cdot,L)\partial_xp_{f_1,\ldots,f_N}(\cdot,L)\rangle_{L^2(0,T)}
\\&-\langle [\mathcal M y](\cdot,0),p_{f_1,\ldots,f_N}(\cdot,0)\rangle_{L^2(0,T)}
+\langle [\mathcal M y](\cdot,L),p_{f_1,\ldots,f_N}(\cdot,L)\rangle_{L^2(0,T)}
\\&+\int_0^T\left\langle\sum_{i=1}^Nf_i(t)\delta_{x_i},y\right\rangle_{(C^0([0,L])'\times C^0([0,L])}dt
\\
=&-\langle0,p_{f_1,\ldots,f_N}(0,\cdot)\rangle_{L^2(0,L)}
+\langle y(T,\cdot),0\rangle_{L^2(0,L)}
\\&-\langle 0,a(\cdot,0)\partial_xp_{f_1,\ldots,f_N}(\cdot,0)\rangle_{L^2(0,T)}
+ \langle v,0\rangle_{L^2(0,T)}
-\langle \left[\mathcal My\right](\cdot,0),0\rangle_{L^2(0,T)}
+\langle \left[\mathcal My\right](\cdot,L),0\rangle_{L^2(0,T)}
\\&+\sum_{i=1}^N\int_0^Tf_i(t)y(t,x_i)dt
\\=&\sum_{i=1}^N\int_0^Tf_i(t)y(t,x_i)dt.
\end{split}
\end{equation*}
That is, the set $(y(\cdot,x_1),\ldots, y(\cdot,x_N))\in (L^2(0,T))^N$ is orthogonal to $(f_1,\ldots,f_N)$ regardless of the control, so \eqref{con:heat0gen} is not approximately controllable (as the closure of the reachable space is orthogonal to $(f_1,\ldots,f_N)$,  in particular it does not contain $(f_1,\ldots,f_N)$). Thus, the necessity is obtained.
\paragraph{}
\textbf{Step 2: existence and uniqueness of the minimizer of $J$.}
Let us suppose that \eqref{eq:equivdualgen} is satisfied.  

Let us define,
\begin{equation*}
\begin{cases}
\displaystyle{J_1(f_1,\ldots,f_N)=\frac{1}{2}\int_{0}^Ta(t,L)|\partial_x p_{f_1,\ldots,f_N}(t,L)|^2dt},\\
\displaystyle{J_2(f_1,\ldots,f_N)=
 \int_{0}^T\sum_{i=1}^N f_i w_i dt},\\
\displaystyle{J_3(f_1,\ldots,f_N)=\eps \|(f_1,\ldots,f_N)\|_{(L^2(0,T))^N}}.
\end{cases}
\end{equation*}
First, it is clear that $J$ is convex, as it is the sum of three convex functions: $J_1$ is convex because it is a positive-definite quadratic function
(see \eqref{eq:coerca}); $J_2$ is convex because it is
a linear function; and $J_3$ because it is a multiple of a norm. 
Let us show that $J$ is strictly convex.
Let us suppose that for some $(f_1,\ldots,f_N)\in (L^2(0,T))^N\setminus\{(0,\ldots ,0)\},(g_1,\ldots,g_N)\in (L^2(0,T))^N$ and $\theta\in(0,1)$: 
\begin{equation}\label{eq:convJeq_2}
J\Big(\theta(f_1,\ldots,f_N) + (1-\theta)(g_1,\ldots,g_N)\Big)=\theta J(f_1,\ldots,f_N)+
(1-\theta)J(g_1,\ldots,g_N).
\end{equation}
Then,  since all the functionals are convex, we have:
\begin{equation}\label{eq:convJeqespgen}
\begin{split}
J_i\Big(\theta(f_1,\ldots,f_N) &+ (1-\theta)(g_1,\ldots,g_N)\Big)\\&=\theta J_i(f_1,\ldots,f_N)+
(1-\theta)J_i(g_1,\ldots,g_N), \ \ i=1,2,3. 
\end{split}
\end{equation}
In particular, applying \eqref{eq:convJeqespgen} with $i=3$ we get that:
\[\begin{split}
\|\theta(f_1,\ldots,f_N) &+ (1-\theta)(g_1,\ldots,g_N)\|_{(L^2(0,T))^N}\\&=
\theta\|(f_1,\ldots,f_N)\|_{(L^2(0,T))^N}+(1-\theta)\|(g_1,\ldots,g_N)\|_{(L^2(0,T))^N}.
\end{split}
\]
Since $(L^2(0,T))^N$ is a Hilbert space, by squaring both sides, we get that:
\[\begin{split}
\langle \theta(f_1,\ldots,f_N)&, (1-\theta)(g_1,\ldots,g_N)\rangle_{(L^2(0,T))^N}
\\&=\|\theta(f_1,\ldots,f_N)\|_{(L^2(0,T))^N}\|(1-\theta)(g_1,\ldots,g_N)\|_{(L^2(0,T))^N}.
\end{split}
\]
Thus, using the Cauchy-Schwarz inequality, $(f_1,\ldots,f_N)$ and $(g_1,\ldots,g_N)$ are proportional with a positive constant, that is, 
there exists $c\geq0$ such that: 
\[(g_1,\ldots,g_N)=c (f_1,\ldots,f_N).
\]
In addition, if we have \eqref{eq:convJeq_2}, then, using \eqref{eq:convJeqespgen} with $i=1$:
\begin{multline}\label{eq:parpthetafgen}
\frac{1}{2}\int_{0}^Ta(t,L)|\partial_x p_{\theta(f_1,\ldots,f_N)+(1-\theta)(g_1,\ldots,g_N)
}(t,L)|^2dt\\= \frac{\theta}{2}\int_{0}^T a(t,L)|\partial_x p_{f_1,\ldots,f_N
}(t,L)|^2dt
+\frac{1-\theta}{2}\int_{0}^T a(t,L)|\partial_x p_{g_1,\ldots,g_N
}(t,L)|^2dt.
\end{multline}
Since $(f_1,\ldots,f_N)\mapsto \partial_x p_{f_1,\ldots,f_N}$ is linear:
\begin{equation*}
\begin{split}
(\theta+c(1-\theta))^2\int_0^Ta(t,L)|\partial_x p_{f_1,\ldots,f_N}(t,L)|^2dt 
&=\int_0^Ta(t,L)|(\theta+c(1-\theta))\partial_x p_{f_1,\ldots,f_N}(t,L)|^2dt 
\\&=\int_0^Ta(t,L)|\partial_x p_{(\theta+c(1-\theta))(f_1,\ldots,f_N)}(t,L)|^2dt 
\\&=\int_0^Ta(t,L)|\partial_x p_{\theta(f_1,\ldots,f_N)+(1-\theta)(g_1,\ldots,g_N)}(t,L)|^2dt. \\
\end{split}
\end{equation*}
Consequently, using \eqref{eq:parpthetafgen} we obtain that:
\begin{equation}\label{eq:thetapxgen}
\begin{split}
(\theta+c(1-\theta))^2\int_0^T&a(t,L)|\partial_x p_{f_1,\ldots,f_N}(t,L)|^2dt 
\\&=\theta\int_{0}^T a(t,L)|\partial_x p_{f_1,\ldots,f_N
}(t,L)|^2dt
+(1-\theta)\int_{0}^T a(t,L)|\partial_x p_{g_1,\ldots,g_N
}(t,L)|^2dt\\
&=\theta\int_{0}^T a(t,L)|\partial_x p_{f_1,\ldots,f_N
}(t,L)|^2dt
+(1-\theta)\int_{0}^T a(t,L)|\partial_x p_{c(f_1,\ldots,f_N)
}(t,L)|^2dt
\\&=\theta \int_0^Ta(t,L)|\partial_x p_{f_1,\ldots,f_N}(t,L)|^2dt+(1-\theta)c^2
\int_0^Ta(t,L)|\partial_x p_{f_1,\ldots,f_N}(t,L)|^2dt.
\end{split}
\end{equation}
Since $(f_1,\ldots,f_N)\neq (0,\ldots ,0)$, using \eqref{eq:equivdualgen}, we obtain that
$\partial_x p_{\theta(f_1,\ldots,f_N)}\neq0$. Consequently, from \eqref{eq:thetapxgen} we get that: 
\[(1\cdot\theta+c(1-\theta))^2=1^2\cdot \theta + c^2(1-\theta).
\]
Due to the fact that squaring is a strictly convex function, $c=1$ and therefore $(f_1,\ldots,f_N)=(g_1,\ldots,g_N)$,
which shows that $J$ is strictly convex.

Moreover, by using again the Cauchy-Schwarz inequality, we can see that $J$ is coercive.
Indeed:
\[J(f_1,\ldots,f_N)\geq \frac{\eps}{2}\|(f_1,\ldots,f_N)\|_{(L^2(0,T))^N} -\frac{1}{\eps}\|(w_1,\ldots,w_N)\|_{(L^2(0,T))^N}.
\]
Thus, a unique minimizer $(\tilde f_1,\ldots,\tilde f_N)$ exists for $J$. 

\textbf{Step 3. Sufficiency for approximate controllability.}
Let us consider $y$ the solution of \eqref{con:heat0gen} with control 
$v=\partial_x p_{\tilde f_1,\ldots,\tilde f_N}$. 
If  $(f_1,\ldots,f_N)\in (L^2(0,T))^N$, we have that:
\begin{equation*}
\begin{split}
0=&\int_0^T\int_0^L(y_t+\mathcal Ly)p_{f_1,\ldots,f_N}\,dx\, dt\\
=&-\langle0,p_{f_1,\ldots,f_N}(0,\cdot)\rangle_{L^2(0,L)}
+\langle y(T,\cdot),0\rangle_{L^2(0,L)}
\\&-\langle 0,a(\cdot,0)\partial_xp_{f_1,\ldots,f_N}(\cdot,0)\rangle_{L^2(0,T)}
+ \langle \partial_x p_{\tilde f_1,\ldots,\tilde f_N},a(\cdot,L)\partial_x p_{f_1,\ldots,f_N}
\rangle_{L^2(0,T)}
\\&-\langle [\mathcal My](\cdot,0),0\rangle_{L^2(0,T)}
+\langle [\mathcal My](\cdot,L),0\rangle_{L^2(0,T)}
\\&+\sum_{i=1}^N\int_0^T f_i(t)y_i(t)dt;
\\=&\left\langle \partial_x p_{\tilde f_1,\ldots,\tilde f_N},a(\cdot,L)\partial_x p_{f_1,\ldots,f_N}
\right\rangle_{L^2(0,T)}
+\sum_{i=1}^N\int_0^T f_i(t)y_i(t)dt.
\end{split}
\end{equation*}
Consequently, we obtain that: 
\begin{equation*}
\left\langle \partial_x p_{\tilde f_1,\ldots,\tilde f_N},a(\cdot,L)\partial_x p_{f_1,\ldots,f_N}\right\rangle_{L^2(0,T)}
=-\sum_{i=1}^N\int_0^T f_i(t)y_i(t)dt.
\end{equation*}
Therefore, we have that, for $h>0$ a small parameter:
\begin{equation*}
\begin{split}
0\leq &J((\tilde f_1,\ldots,\tilde f_N)\pm h(f_1,\ldots,f_N)
)- J(\tilde f_1,\ldots,\tilde f_N)\\
=&\pm h \left\langle \partial_x p_{\tilde f_1,\ldots,\tilde f_N},a(\cdot,L)\partial_x p_{f_1,\ldots,f_N}\right\rangle_{L^2(0,T)}
\pm h\sum_{i=1}^N\int_0^T f_i(t)w_i(t)dt
\\&+\eps\left(\|(\tilde f_1,\ldots,\tilde f_N)\pm h(f_1,\ldots,f_N)
\|_{(L^2(0,T))^N}
-\|(\tilde f_1,\ldots,\tilde f_N)\|_{(L^2(0,T))^N}\right)+O(h^2)
\\=&\mp h\sum_{i=1}^N\int_0^Tf_i(t)(y_i(t)-w_i(t))dt
\\&+\eps\left(\|(\tilde f_1,\ldots,\tilde f_N)\pm h(f_1,\ldots,f_N)
)\|_{(L^2(0,T))^N}
-\|(\tilde f_1,\ldots,\tilde f_N)\|_{(L^2(0,T))^N}\right)+O(h^2).
\end{split}
\end{equation*}

Thus, using the triangular inequality, we obtain from this estimate that:
\begin{equation*}
\left|\sum_{i=1}^N\int_0^Tf_i(t)(y_i(t)-w_i(t))dt\right|
\leq \eps\|(f_1,\ldots,f_N)\|_{(L^2(0,T))^N}
+O(h),
\end{equation*}
which implies, taking the limit as $h\to 0$, for any 
$(f_1,\ldots,f_N)$:
\begin{equation*}
\left|\sum_{i=1}^N\int_0^Tf_i(t)(y_i(t)-w_i(t))dt\right|
\leq \eps\|(f_1,\ldots,f_N)\|_{(L^2(0,T))^N}.
\end{equation*}
Consequently, using duality, we obtain \eqref{eq:tarintcongen}, proving the sufficiency of \eqref{eq:equivdualgen} for approximate controllability (as we have found a control for any $\eps>0$ and $(w_1,\ldots,w_N)\in L^2(0,T)$). 
\end{proof}

Let us now prove the main result of this section:

\begin{theorem}[Simultaneous interior pointwise control]
\label{tm:interncontrgencon}
Let $x_1,x_2,x_3\in (0,L)$
satisfying $$x_1<x_2<x_3.$$ Then,
\begin{enumerate}
\item\label{it:onetargonecongen}  
System \eqref{con:heat0gen} is  approximately controllable
on $(0,T)\times\{x_1\}$.

\item\label{it:twotargetgen} System \eqref{con:heat0genbis} is  approximately controllable on $(0,T)\times\{x_1,x_2\}$.

\item \label{it:twotargonecongen}  System \eqref{con:heat0gen} is not approximately controllable
on $(0,T)\times\{x_1,x_2\}$.
\item \label{it:threetargetgen} System \eqref{con:heat0genbis} is not 
approximately controllable on $(0,T)\times\{x_1,x_2,x_3\}$.

\end{enumerate}
\end{theorem}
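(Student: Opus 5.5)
By Lemma \ref{lm:interiordualgen}, every item reduces to a statement about the adjoint system \eqref{eq:dualsyspointcongen}. Items \ref{it:onetargonecongen} and \ref{it:twotargetgen} require a unique continuation property. Items \ref{it:twotargonecongen} and \ref{it:threetargetgen} require an explicit nontrivial $(f_i)$ whose adjoint state has vanishing normal derivative(s) at the controlled boundary point(s).

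\textbf{Positive results.} For item \ref{it:onetargonecongen}, let $f_1\in L^2(0,T)$ and suppose $\partial_x p_{f_1}(\cdot,L)=0$. On $(0,T)\times(x_1,L)$ the function $p=p_{f_1}$ solves the homogeneous equation $-p_t+\mathcal L^*p=0$ with Cauchy data $p(\cdot,L)=\partial_xp(\cdot,L)=0$. The coefficients are analytic, so I would invoke a Holmgren-type uniqueness result for the lateral Cauchy problem, with $x$ treated as the evolution variable and the boundary $x=L$ noncharacteristic. This gives $p\equiv0$ on $(0,T)\times(x_1,L)$. The Dirac source produces a jump condition at $x_1$:
\[
p \text{ is continuous across } x_1, \qquad -a(t,x_1)\bigl[\partial_xp\bigr]_{x_1^-}^{x_1^+}=f_1(t),
\]
which follows by integrating the equation across $x_1$. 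On the left side we have $p(\cdot,x_1^-)=p(\cdot,x_1^+)=0$. Together with $p(\cdot,0)=0$ and $p(T,\cdot)=0$, uniqueness of the backward problem on $(0,x_1)$ gives $p\equiv0$ there. Hence $\partial_xp(\cdot,x_1^\pm)=0$, and the jump condition forces $f_1=0$.

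For item \ref{it:twotargetgen}, I would run the same argument from both ends. The Cauchy data at $x=L$ give $p\equiv0$ on $(x_2,L)$, and the Cauchy data at $x=0$ give $p\equiv0$ on $(0,x_1)$. The strip $(x_1,x_2)$ then carries zero Dirichlet data at both ends and zero final data, so $p\equiv0$ there too. The jump conditions at $x_1$ and $x_2$ now give $f_1=f_2=0$.

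\textbf{Negative results.} For item \ref{it:twotargonecongen}, I would construct $p$ directly rather than solving for it. Choose a smooth $\phi(t,x)$ supported in $(0,T)\times[x_1,x_2]$ and $C^\infty$ inside that strip, with $\phi(t,x_1)=\phi(t,x_2)=0$, $\partial_x\phi(\cdot,x_1^+)\neq0$, and $\phi(t,\cdot)=0$ near $t=T$. Then $-\phi_t+\mathcal L^*\phi=:F$ is smooth on $(x_1,x_2)$, but in general $F\neq0$, so $\phi$ alone does not solve the adjoint system. This is the main obstacle, because in \eqref{eq:dualsyspointcongen} the adjoint state must solve the homogeneous equation away from the points $x_i$.

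To get around it, I would solve a sideways problem on $(x_1,x_2)$: find $p$ with $-p_t+\mathcal L^*p=0$ there, $p(\cdot,x_1)=p(\cdot,x_2)=0$, $p(T,\cdot)=0$, and extend it by zero outside $(x_1,x_2)$. Such a $p$ is simply the solution of the backward Dirichlet problem on $(x_1,x_2)$, and that solution is identically zero. So the construction fails as stated. The fix is to relax the Dirichlet condition at one endpoint: choose the data at $x_1$ so that $p(\cdot,x_1)=g\neq0$, let $p$ solve the equation on $(0,x_1)$ with $p(\cdot,0)=0$, and take $p=0$ on $(x_2,L)$. Concretely, pick any nonzero $g\in H^1(0,T)$ with $g(T)=0$. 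Solve on $(0,x_1)$ with Dirichlet data $0$ and $g$, and on $(x_1,x_2)$ with Dirichlet data $g$ and $0$; in both cases the final data is zero. On $(x_2,L)$ set $p\equiv0$. This $p$ is continuous across $x_1$ and $x_2$, and it satisfies $\partial_xp(\cdot,L)=0$ trivially. Define $f_i=-a(t,x_i)[\partial_xp]_{x_i^-}^{x_i^+}$ for $i=1,2$. Then $p=p_{f_1,f_2}$, and it remains to check that $(f_1,f_2)\neq0$ and $f_i\in L^2(0,T)$. If $f_1=f_2=0$, the function $p$ would solve the homogeneous equation on all of $(0,L)$ and vanish on $(x_2,L)$. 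Unique continuation would then force $p\equiv0$ and hence $g=0$, a contradiction. For $f_i\in L^2$, I would use trace regularity for $g\in H^1$ with $g(T)=0$, which gives $\partial_xp(\cdot,x_i^\pm)\in L^2(0,T)$, or choose $g$ smooth and vanishing near $T$ and near $0$.

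Item \ref{it:threetargetgen} uses the same construction with three points. Take $p\equiv0$ on $(0,x_1)\cup(x_3,L)$, so that both $\partial_xp(\cdot,0)$ and $\partial_xp(\cdot,L)$ vanish. Prescribe $p(\cdot,x_2)=g\neq0$ and solve the two Dirichlet problems on $(x_1,x_2)$ and $(x_2,x_3)$. The resulting jumps define $(f_1,f_2,f_3)$, which is nonzero by the same unique continuation argument. Lemma \ref{lm:interiordualgen} then shows that approximate controllability fails.
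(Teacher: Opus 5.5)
Your proof is correct and follows the same route as the paper. For items 1--2 you use Holmgren's theorem from the controlled endpoint(s) together with uniqueness for the backward Dirichlet problem. For items 3--4 you build a piecewise adjoint state from Dirichlet problems on the subintervals, extend it by zero, and read the $f_i$ off the flux jumps.

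The one substantive difference is the interface datum: you take $g$ with $g(T)=0$, whereas the paper takes the constant $1$. Your choice is actually the right one. Constant data is incompatible with $p(T,\cdot)=0$ at the corner $(T,x_1)$, and it produces fluxes $\partial_x p(\cdot,x_1^{\pm})\sim (T-t)^{-1/2}$, so the resulting $f_1$ is not in $L^2(0,T)$. The paper's lifting argument fails for the same reason, since its lifted function has final value $-(1-\ell)\notin H^1_0$. With $g(T)=0$ the lifting genuinely gives $L^2(0,T;H^2)$ regularity, and hence $f_i\in L^2(0,T)$.
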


\begin{proof}
The proof of Theorem \ref{tm:interncontrgencon} is based on the  duality provided by Lemma \ref{lm:interiordualgen}.

\begin{itemize}

\item Implication \ref{it:onetargonecongen}
is equivalent by Lemma \ref{lm:interiordualgen}
to proving that if $f_1\in L^2(0,T)$
satisfies $\partial_{x}p_{f_1}(\cdot,L)=0$, then $f_1=0$
in $L^2(0,T)$. For that, we are going to see that $p_{f_1}$ is null at both sides of $x=x_1$. Let us start with $x>x_1$. Let us define  $\bar t=\inf\{t \in [0,T]: p_{f_1}=0\mbox{ on }(t,T)\times(x_1,L)\}$. Let us prove by contradiction that $\bar t=0$. \\Let us suppose that $\bar t>0$.  We define:
$$\tilde p_{f_1}(t,x):=\begin{cases}
p_{f_1}(t,x) & \mbox{for } (t,x) \in (0,T)\times(x_1,L),\\
0 & \mbox{for } (t,x) \in (0,T)\times[L,+\infty).
\end{cases}$$
By continuity, $\tilde p_{f_1}(\bar t,\cdot)=0$. Moreover, let $\eps>0$ be small enough so that there exists the prolongation of $\mathcal L^*$ to $[\bar t-\eps,T]\times[x_1,L+\eps]$ as an analytic operator by extending its coefficients (see Proposition \ref{prop:analytic} and Remark \ref{rk:coerc} in the Appendix). Then, using that 
since the coefficients are analytic, we have that:
\[-(\tilde p_{f_1})_t+\mathcal L^*\tilde p_{f_1}=0 \qquad\mbox{ on }(\bar t-\eps,T)\times(x_1,L+\eps).\]
Hence, by Holmgren's Uniqueness Theorem  (see \cite[Thm. 8.6.5]{hormander1983analysis}), since $\tilde p_{f_1}$ is null on $(\bar t-\eps,T)\times (L,L+\eps)$, $\tilde p_{f_1}$ is null on $(\bar t-\eps,T)\times (x_1,L+\eps)$, arriving at a contradiction with the definition of $\bar t$, so $\bar t=0$. 
\\This implies that, by continuity of the solution, $p_{f_1}(\cdot,0)=0$.
 Moreover, from
$p_{f_1}(\cdot,0)=p_{f_1}(\cdot,x_1)=0$, using the uniqueness result of the (backward) heat equation with Dirichlet boundary conditions,
we obtain that $p_{f_1}=0$ in $(0,T)\times(0,x_1)$.
 Consequently, from $p_{f_1}=0$ in $(0,T)\times (0,x_1)$ and $(0,T)\times(x_1,L)$,
 we obtain that $p_{f_1}=0$ in $(0,T)\times(0,L)$, and therefore $f_1=0$ in $L^2(0,T)$.

\item 
Implication \ref{it:twotargetgen} is equivalent 
by Lemma \ref{lm:interiordualgen} to proving that
if $f_1,f_2\in L^2(0,T)$ 
 satisfy $\partial_{x}p_{f_1,f_2}(\cdot,0)=0$ and 
$\partial_{x}p_{f_1,f_2}(\cdot,L)=0$, then $f_1=f_2=0$. 
As in the previous item, from $\partial_{x}p_{f_1,f_2}(\cdot,0)=0$ we obtain that $p_{f_1,f_2}$
is null in $(0,T)\times (0,x_1)$, and from $\partial_{x}p_{f_1,f_2}(\cdot,L)=0$  
that $p_{f_1,f_2}$ is null in $(0,T)\times(x_2,L)$.
Consequently, if  $\partial_{x}p_{f_1,f_2}(\cdot,0)=0$ and 
$\partial_{x}p_{f_1,f_2}(\cdot,L)=0$,
$p_{f_1,f_2}(\cdot,x_1)=p_{f_1,f_2}(\cdot,x_2)=0$,
so $p_{f_1,f_2}=0$ also in $(0,T)\times(x_1,x_2)$. Thus
$p_{f_1,f_2}=0$ in $(0,T)\times(0,L)$, and in particular $f_1=f_2=0$.

\item Implication \ref{it:twotargonecongen} is equivalent by Lemma \ref{lm:interiordualgen} to proving
that for some $(f_1,f_2)\in (L^2(0,T))^2\setminus\{(0,0)\}$ 
the solution of \eqref{eq:dualsyspointcongen} for $N=2$
satisfies $\partial_{x}p(\cdot,L)=0$.
One of such solutions is given by:

\begin{equation}\label{def:ppartida4}
p(t,x)=\begin{cases}
\tilde p(t,x)& \mbox{ for } (t,x) \in (0,T)\times [0,x_1],\\
\hat p(t,x) & \mbox{ for } (t,x) \in (0,T)\times (x_1,x_2],\\
0 & \mbox{ for } (t,x) \in (0,T)\times (x_2,L],
\end{cases}
\end{equation}

for $\tilde p$ the solution of:
\begin{equation*}
\begin{cases}
-\tilde p_{t}+\mathcal L^*\tilde p=0&
 \mbox{ in } (0,T)\times(0,x_1),\\
\tilde p(\cdot,0)=0& \mbox{ on } (0,T),\\
\tilde p(\cdot,x_1)=1& \mbox{ on } (0,T),\\
\tilde p(T,\cdot)=0 & \mbox{ on } (0,x_1),
\end{cases}
\end{equation*}
and $\hat p$ the solution of:
\begin{equation*}
\begin{cases}
-\hat p_{t}+\mathcal L^*\hat p=0&
 \mbox{ in } (0,T)\times(x_1,x_2),\\
\hat p(\cdot,x_1)=1& \mbox{ on } (0,T),\\
\hat p(\cdot,x_2)=0& \mbox{ on } (0,T),\\
\hat p(T,\cdot)=0 & \mbox{ on } (x_1,x_2).
\end{cases}
\end{equation*}
We will see in the next lines that \eqref{def:ppartida4} satisfies \eqref{eq:dualsyspointcongen} for:
\begin{equation*}
\begin{split}
f_1&=-a(\cdot,x_1)\partial_x\hat p(\cdot,x_1)
+a(\cdot,x_1)\partial_x\tilde p(\cdot,x_1),\\ f_2&=a(\cdot,x_2)\partial_x\hat p(\cdot,x_2).
 \end{split}
\end{equation*}

Since the function $p$ does not have any irregularity in the time variable, we have that:
\[p_t=\tilde p_t 1_{(0,x_1)}+\hat p_t1_{(x_1,x_2)}.\]
Moreover, since $p$ is continuous and differentiable almost everywhere, we have the following derivative as distributions:
\[\partial_x p= \partial_x\tilde p1_{(0,x_1)}+\partial_x\hat p1_{(x_1,x_2)}.
\]
Consequently, we have the following result with respect to the space variable:
\[\partial_{xx}p=\partial_{xx}\tilde p 1_{(0,x_1)}+
(\partial_x\hat p(\cdot,x_1)-\partial_x\tilde p(\cdot,x_1))\delta_{x_1}
+\partial_{xx}\hat p1_{(x_1,x_2)}
-\partial_x\hat p(\cdot,x_2)\delta_{x_2}.\]

Therefore, \eqref{def:ppartida4} satisfies \eqref{eq:dualsyspointcongen} using the chosen functions $f_1$ and $f_2$. It can also be shown that $\partial_{x}p(\cdot,L)=0$ for the  function $p$ defined in \eqref{def:ppartida4}.

To conclude, we have to verify that $(f_1,f_2)\in(L^2(0,T))^2\setminus\{(0,0)\}$. First of all, let us see that $f_1,f_2\in L^2(0,T)$. For that, as $a$ is analytic, it suffices to prove that $\partial_x\tilde p(\cdot,x_1),\partial_x\hat p(\cdot,x_1), \partial_x\hat p(\cdot,x_2)\in L^2(0,T)$. To this end, we introduce the affine function
\[
\ell(x)=\frac{x-x_1}{x_2-x_1},
\]
and define
\[
v:=\hat p - (1-\ell).
\]
Then, since $\hat p(\cdot,x_1)=1$ and $\hat p(\cdot,x_2)=0$, $v$ satisfies homogeneous Dirichlet boundary conditions,
\[
v(\cdot,x_1)=v(\cdot,x_2)=0,
\]
and solves a backward parabolic equation of the form
\[
-v_t+\mathcal L^*v=g
\]
in $(0,T)\times(x_1,x_2)$, where $g=-\mathcal L^*(1-\ell)$ is a smooth
function. Standard parabolic regularity theory (see \cite[Chapter 7]{Evans}) yields
\[
v\in L^2(0,T;H^2(x_1,x_2))
\cap H^1(0,T;L^2(x_1,x_2)).
\]
Hence,
\[
\hat p\in L^2(0,T;H^2(x_1,x_2)).
\]
By the trace theorem,
\[
|\partial_x\hat p(t,x_1)|^2
+
|\partial_x\hat p(t,x_2)|^2
\leq C \|\hat p(t,\cdot)\|_{H^2(x_1,x_2)}^2
\]
for a.e. $t\in(0,T)$. Integrating over $(0,T)$ gives
\[
\partial_x\hat p(\cdot,x_1),
\ \partial_x\hat p(\cdot,x_2)
\in L^2(0,T).
\]

The same argument applied to $\tilde p$ yields
\[
\partial_x\tilde p(\cdot,x_1)\in L^2(0,T).
\]
Therefore,
\[
f_1,f_2\in L^2(0,T).
\]

Finally, $(f_1,f_2)\neq(0,0)$. Indeed, if $f_1=f_2=0$, then the jumps of the
fluxes at $x_1$ and $x_2$ vanish, and thus $p$ would satisfy the homogeneous
adjoint problem associated with \eqref{eq:dualsyspointcongen}. By uniqueness,
this would imply $p=0$, contradicting the fact that $p(\cdot,x_1)=1$.
Hence,
\[
(f_1,f_2)\in (L^2(0,T))^2\setminus\{(0,0)\}.
\]

\item Implication \ref{it:threetargetgen} is equivalent by Lemma \ref{lm:interiordualgen} to proving
that for some\\ $(f_1,f_2,f_3)\in (L^2(0,T))^3\setminus\{(0,0,0)\}$ 
the solution of \eqref{eq:dualsyspointcongen} for $N=3$ satisfies $\partial_{x}p(\cdot,0)=\partial_{x}p(\cdot,L)=0$.
One of such solutions is given by:

\begin{equation}\label{def:ppartida4gen} 
p(t,x)=\begin{cases} 0 & \mbox{ for } (t,x) \in (0,T)\times [0,x_1],\\ \tilde p(t,x) & \mbox{ for } (t,x) \in (0,T)\times (x_1,x_2],\\ \hat p(t,x) & \mbox{ for } (t,x) \in (0,T)\times (x_2,x_3],\\ 0 & \mbox{ for } (t,x) \in (0,T)\times (x_3,L], 
\end{cases} 
\end{equation}

for $\tilde p$ the solution of:
\begin{equation*}
\begin{cases}
-\tilde p_{t}+\mathcal L^*\tilde p=0&
 \mbox{ in } (0,T)\times(x_1,x_2),\\
\tilde p(\cdot,x_1)=0& \mbox{ on } (0,T),\\
\tilde p(\cdot,x_2)=1& \mbox{ on } (0,T),\\
\tilde p(T,\cdot)=0 & \mbox{ on } (x_1,x_2),
\end{cases}
\end{equation*}
and $\hat p$ the solution of:
\begin{equation*}
\begin{cases}
-\hat p_{t}+\mathcal L^*\hat p=0&
 \mbox{ in } (0,T)\times(x_2,x_3),\\
\hat p(\cdot,x_2)=1& \mbox{ on } (0,T),\\
\hat p(\cdot,x_3)=0& \mbox{ on } (0,T),\\
\hat p(T,\cdot)=0 & \mbox{ on } (x_2,x_3).
\end{cases}
\end{equation*}
In fact, using a similar argument to the previous implication, it can be seen that \eqref{def:ppartida4gen} satisfies \eqref{eq:dualsyspointcongen} for:
\begin{equation*}
\begin{split}
f_1&=-a(\cdot,x_1)\partial_x\tilde p(\cdot,x_1),\\ 
f_2&=-a(\cdot,x_2)\partial_x\hat p(\cdot,x_2)
+a(\cdot,x_2)\partial_x\tilde p(\cdot,x_2),\\ f_3&=a(\cdot,x_3)\partial_x\hat p(\cdot,x_3).
 \end{split}
\end{equation*}

\end{itemize}
Moreover, we can easily verify that $\partial_{x}p(\cdot,0)=\partial_{x}p(\cdot,L)=0$ for the function $p$ defined in \eqref{def:ppartida4gen}. Finally, we can verify that $(f_1,f_2,f_3)\in (L^2(0,T))^3\setminus\{(0,0,0)\}$ as in the previous item. 
\end{proof}

\section{Controllability of trajectories on moving target points}\label{sec:mtrg}
\paragraph{}
In this section, we analyze the problem of controlling the system on target points that do not remain fixed. In the previous section we have seen that with one boundary control, as in \eqref{con:heat0gen}, we can at most control one target, whereas with two boundary controls, as in \eqref{con:heat0genbis}, we can at most control two target points. In this section,  we are going to prove that a similar behavior happens when the target points are not fixed.

\subsection{Controlling one target point}
Let us first prove controllability with one target point:

\begin{theorem}\label{tm:contrajh}
Consider the system \eqref{con:heat0gen}. 
Also, consider an analytic function $h$ such that $\min_{[0,T]}h(t)>0$ and $\max_{[0,T]}h(t)<L$.
Then, for all $w\in L^2(0,T)$ and $\eps>0$ there exists a control $v$ such that:
\[\|y(t,h(t))-w(t)\|_{L^2(0,T)}<\eps,\]
where $y$ is the solution of \eqref{con:heat0gen}. 
\end{theorem}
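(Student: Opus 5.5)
We reduce Theorem \ref{tm:contrajh} to item \ref{it:onetargonecongen} of Theorem \ref{tm:interncontrgencon} by a change of variables that freezes the moving point. Fix $x_1\in(0,L)$; a natural choice is $x_1=L/2$. We then build a map $\Phi(t,\cdot):[0,L]\to[0,L]$ with the following properties: it is a diffeomorphism for every $t$, it fixes $0$ and $L$, it sends $h(t)$ to $x_1$, and it is real analytic in $(t,x)$ on $[0,T]\times[0,L]$. A concrete choice is piecewise linear: $x\mapsto x_1 x/h(t)$ on $[0,h(t)]$ and the affine map $[h(t),L]\to[x_1,L]$ on the other piece. This map is not analytic across $x=h(t)$, though, so instead we will take the smooth interpolation
\[
\Phi(t,x)=x+\big(x_1-h(t)\big)\,\frac{x(L-x)}{h(t)(L-h(t))}.
\]
It satisfies $\Phi(t,0)=0$, $\Phi(t,L)=L$ and $\Phi(t,h(t))=x_1$, and it is analytic because $h$ is analytic and $0<h<L$. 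We must check that $\partial_x\Phi>0$. If this fails for large displacements, we compose finitely many such maps, or we use a Möbius-type analytic map of the interval. The concern is less pressing than it looks, because any analytic family of increasing diffeomorphisms of $[0,L]$ suffices. One option is to write $\Phi(t,\cdot)$ as the time-one flow of an analytic vector field vanishing at $0$ and $L$ whose flow moves $h(t)$ to $x_1$. Another is the monotone rational map $x\mapsto \frac{\alpha x}{1+\beta x}$ with $t$-analytic parameters chosen so that $L\mapsto L$ and $h(t)\mapsto x_1$.

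Set $z(t,\xi)=y(t,\Phi^{-1}(t,\xi))$, so that $y(t,x)=z(t,\Phi(t,x))$. The chain rule gives
\[
z_t-\tilde a\,\partial_{\xi\xi}z+\tilde b\,\partial_\xi z+\tilde c z=0 .
\]
Here $\tilde a=a\,(\partial_x\Phi)^2$ and $\tilde b=b\,\partial_x\Phi-a\,\partial_{xx}\Phi-\partial_t\Phi$, each evaluated at $(t,\Phi^{-1}(t,\xi))$, and $\tilde c=c\circ\Phi^{-1}$. These coefficients are analytic because compositions and inverses of analytic maps with nonvanishing derivative are analytic (analytic inverse function theorem). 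Moreover $\tilde a\ge a_0\min(\partial_x\Phi)^2>0$, so \eqref{eq:coerca} holds. The boundary and initial conditions are preserved, since $\Phi$ fixes the endpoints and $z(0,\cdot)=0$, and the control enters as $z(\cdot,L)=v$. Finally, $y(t,h(t))=z(t,x_1)$.

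Applying item \ref{it:onetargonecongen} of Theorem \ref{tm:interncontrgencon} to the transformed system on $(0,T)\times\{x_1\}$ yields $v\in L^2(0,T)$ with $\|z(\cdot,x_1)-w\|_{L^2(0,T)}<\eps$. This is exactly the claim for $y$.

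The main obstacle is rigor at low regularity. With $L^2$ boundary data the solutions are only transposition solutions, so we must justify that the change of variables maps transposition solutions of \eqref{con:heat0gen} to transposition solutions of the transformed system. We intend to handle this by density. For smooth compatible $v$ the chain-rule computation is classical. Both solution maps $v\mapsto y$ and $v\mapsto z$ are continuous from $L^2(0,T)$ into $L^2(0,T;C^0([\delta,L-\delta]))$ by the interior regularity in the Remark, and $\Phi$ maps $[\delta,L-\delta]$ into some $[\delta',L-\delta']$. The identity $y(t,h(t))=z(t,x_1)$ therefore passes to the limit. A secondary point is verifying that the chosen $\Phi$ is monotone and analytic up to the boundary; we will settle this with the explicit rational map if the quadratic interpolation fails.
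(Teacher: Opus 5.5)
Your proposal is essentially the paper's proof: freeze the moving point with an analytic, endpoint-fixing, uniformly increasing change of variables, check that the new coefficients are analytic with $\tilde a$ bounded below, and apply item \ref{it:onetargonecongen} of Theorem \ref{tm:interncontrgencon}. The paper uses $\chi(t,x)=\alpha(t)x^n+\beta(t)x$ with $n$ large after scaling to $L=1$; your density argument for transposition solutions and your direction $y(t,x)=z(t,\Phi(t,x))$ are correct, and your only slip is that $\partial_t\Phi$ enters $\tilde b$ with a plus sign, which is harmless. You should drop the quadratic interpolation, which is monotone only if $h^2<x_1L$ and $(L-h)^2<(L-x_1)L$ for all $t$, and commit to the rational map, which works for every admissible $h$: with $\beta=\frac{x_1-h}{h(L-x_1)}$ and $\alpha=1+\beta L=\frac{x_1(L-h)}{h(L-x_1)}>0$ one has $\partial_x\Phi=\alpha/(1+\beta x)^2$ and $\min\{1,\alpha\}\le 1+\beta x\le\max\{1,\alpha\}$ on $[0,L]$, so $\partial_x\Phi\ge\min\{\alpha,\alpha^{-1}\}$, which is bounded below uniformly in $t$ since $h$ takes values in a compact subset of $(0,L)$.
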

To prove Theorem \ref{tm:contrajh}, we propose a diffeomorphism so that the controlled trace does not change its position in the time variable, and then apply Theorem \ref{tm:interncontrgencon}.

\begin{proof}
First, by a simple change of variable, we can assume from now on that $L=1$. Indeed, this can be done by defining $\tilde x=x/L$, as we get a system of the form \eqref{con:heat0gen} with coefficient $a$ replaced by $L^2a$, $b$ replaced by $Lb$ and $h(t)$ replaced by $h(t)/L$. In particular, this does not affect the analyticity of the target nor that of the coefficients. Thus, we may suppose that the spatial domain is $[0,1]$. Notably, we suppose that $h$ is analytic and that there exist $m>0$ and $M<1$ such that $h(t)\in[m,M]$ for all $t\in[0,T]$. From now on, we consider $\displaystyle{m=\min_{t\in[0,T]}h(t)}$ and $\displaystyle{M=\max_{t\in[0,T]}h(t)}$. 

We propose the following change of variables:
\begin{equation}\label{chi_1}
    \chi(t,x)= \alpha(t)x^n+\beta(t)x,
\end{equation}
for $\alpha,\beta\in C^\infty([0,T])$ analytic functions, 
and $n\in\mathbb N$ to be fixed. 
We are going to find $\alpha$, $\beta$ and $n$ such that:
\begin{itemize}
    \item $\chi(t,0)=0$,
    \item $\chi(t,h(t))=m$,
    \item $\chi(t,1)=1$,
    \item There is $c>0$ such that   $\chi_x(t,x)>c$ 
    for all $t\in[0,T]$ and $x\in[0,L]$. 
\end{itemize}
Indeed, the first and third one ensures that the new domain is $[0,1]$ for all time $t\in[0,T]$, the third one ensures that the new target is a fixed point $x=m$, and the fourth one ensures that for all $t\in[0,T]$ the function $x\mapsto \chi(t,x)$ is a diffeomorphism from $[0,1]$ to $[0,1]$.

From \eqref{chi_1} and the previous second and third conditions, we have to solve the following system for all $t\in[0,T]$:
\begin{equation*}
    \begin{cases}
        \alpha(t)+\beta(t)=1,\\
        \alpha(t)(h(t))^n+\beta(t)h(t)=m.
    \end{cases}
\end{equation*}
The solution is given by:
\[\alpha(t)=\frac{h(t)-m}{h(t)-(h(t))^n}, \ \ \ \beta(t)=\frac{m-(h(t))^n}{h(t)-(h(t))^n}.\]
Note that $\alpha(t)\geq0$. Also, for $n$ large enough, $\beta(t)>0$. Indeed, since $M<1$,
it suffices to pick $n$ so that:
\begin{equation}\label{est:mMalpha}
m-M^n>0.
\end{equation}
As $\alpha(t)\geq0$,
\eqref{est:mMalpha} implies that:
\begin{equation}\label{est:chitx}
\chi_x(t,x)\geq \beta(t)\geq c(m,M,n)=
\min_{s\in[m,M]}\frac{m-s^n}{s-s^n}>0,
\qquad \forall t\in[0,T],\quad \forall x\in [0,1],
\end{equation}
so all the properties are satisfied.

Considering the fourth property, we can invert the change of variables in $x$, which yields the analytic function $\eta(t,x)$ such that $$\chi(t,\eta(t,x))=\eta(t,\chi(t,x))=x.$$ \noindent
Note that the function $\eta(t,x)$ is analytic. Indeed, let us define
\[F(t,x,y)=\chi(t,y)-x.\]
As,
\[|\partial_yF(t,x,y)|=|\partial_y\chi(t,y)|>0,\]
the Implicit Function Theorem ensures that $\eta$ is analytic
(see Theorem 8.6 of Chapter 1 in \cite{kaup1983holomorphic}). 

This allows us to make a change of variables. If $y$ is a solution of
\eqref{con:heat0gen}, then let us obtain the equation that $z(t,x)=y(t,\chi(t,x))$ satisfies. 
As $\eta$ is the spatial inverse of $\chi$,
 we have the following:
\[y(t,x)=z(t,\eta(t,x)).
\]
Thus, we find that $z$ satisfies:
\begin{equation}\label{eq:newzed}
\begin{split}
z_t+\eta_t\partial_xz(t,x)&-a(t,x)\eta_x^2\partial_{xx}z(t,x)-a(t,x)\eta_{xx}\partial_xz(t,x)\\&+b(t,x)\eta_x\partial_{x}z(t,x)+c(t,x)z(t,x)=0. 
\end{split}
\end{equation}

Now, if we define the coefficients 
\[\tilde a(t,x)=a(t,\chi(t,x))\eta_{x}^2, \ \ \tilde b(t,x)=\eta_t-a(t,\chi(t,x))\eta_{xx}+b(t,\chi(t,x))\eta_x \ \text{and} \ \tilde c(t,x)=c(t,\chi(t,x)),\]
we can observe that they are analytic. Thus, \eqref{eq:newzed} can be rewritten as:
\begin{equation}\label{eq:zedtilde}
z_t-\tilde a(t,x)\partial_{xx}z+\tilde b(t,x)\partial_xz+\tilde c(t,x)z=0.
\end{equation}

In order to prove a similar condition to \eqref{eq:coerca} for \eqref{eq:zedtilde}, recalling that
\[\tilde a(t,x)= a(t,\chi(t,x))\cdot\left[\eta_x(t,\chi(t,x))
\right]^2\]
and knowing that $\eta(t,\chi(t,x))=x$,
we have
\[\eta_x(t,\chi(t,x))=\frac{1}{\chi_x(t,x)}.\]
Thus, using \eqref{eq:coerca} and \eqref{est:chitx}, we can ensure that there exists $\tilde a_0>0$ such that:
\[|\tilde a(t,x)|\geq\tilde a_0.\]
Indeed, note that once $h$ is fixed, $m$ and $M$ are fixed, and $n$ is also fixed by choosing it large enough. Consequently, once $h$ is fixed, we obtain $\chi_x\geq c(m,M,n)>0$, which provides a uniform lower bound for all $t\in[0,T]$ and $x\in[0,L]$ 

Because of this, we can use Theorem \ref{tm:interncontrgencon} to conclude the proof. 
\end{proof}

\subsection{Controlling two target points}
\paragraph{}
In this section, we prove the following result:

\begin{theorem}\label{tm:contrajhbis}
Consider the system \eqref{con:heat0genbis}. Also, consider two analytic functions $h_1,h_2$ such that $h_1(t)<h_2(t)$ for all $t\in[0,T]$, with $\min_{[0,T]}h_1(t)>0$ and $\max_{[0,T]}h_2(t)<L$.
Then, for all $w_1,w_2\in L^2(0,T)$ and $\eps>0$ there exist two controls $v_0\in L^2(0,T)$ and $v_L\in L^2(0,T)$ such that:
\[\|y(t,h_1(t))-w_1(t)\|_{L^2(0,T)}+\|y(t,h_2(t))-w_2(t)\|_{L^2(0,T)}<\eps,\]
where $y$ is the solution of \eqref{con:heat0genbis}. 
\end{theorem}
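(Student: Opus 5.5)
We mimic the proof of Theorem \ref{tm:contrajh}. The idea is to build an analytic, time-dependent spatial diffeomorphism $\chi(t,\cdot)$ of $[0,L]$ that sends the two moving target points to two fixed points $m_1<m_2$. This reduces the problem to item \ref{it:twotargetgen} of Theorem \ref{tm:interncontrgencon}.

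As before, we first rescale to $L=1$. We set $m_1=\min_{[0,T]}h_1$ and $M_2=\max_{[0,T]}h_2$, and pick fixed points $0<m_1<m_2<1$; natural choices are the infimum of $h_1$ and some value strictly between. We look for $\chi$ such that
\[
\chi(t,0)=0,\quad \chi(t,1)=1,\quad \chi(t,h_1(t))=m_1,\quad \chi(t,h_2(t))=m_2,\quad \chi_x\ge c>0 .
\]
A polynomial ansatz $\chi=\alpha x^{n}+\beta x+\gamma x^{k}$ leads to a $3\times3$ linear system, but positivity of $\chi_x$ is awkward to verify. We therefore prefer an explicit monotone construction that is analytic in $t$ and $x$. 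One option is to take $\chi(t,\cdot)$ as the inverse of a map $\eta(t,\cdot)$ that we construct directly. Concretely, let $\eta(t,x)=x+\phi(t)\psi_1(x)+\theta(t)\psi_2(x)$, where $\psi_1,\psi_2$ are fixed analytic functions (polynomials) vanishing at $0$ and $1$, with $\psi_1(m_1)=1$, $\psi_1(m_2)=0$, $\psi_2(m_1)=0$ and $\psi_2(m_2)=1$. Choosing $\phi=h_1-m_1$ and $\theta=h_2-m_2$ gives $\eta(t,m_i)=h_i(t)$, and this map is analytic. The remaining requirement is $\eta_x>0$, which holds whenever $|h_i-m_i|$ is small relative to $\|\psi_i'\|_\infty$.

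Since this smallness fails in general, we handle large excursions by subdividing time. This, however, breaks the single-interval structure required by Theorem \ref{tm:interncontrgencon}. The better approach is to construct $\eta(t,\cdot)$ as an analytic monotone interpolant. We take
\[
\eta(t,x)=\int_0^x g(t,s)\,ds
\]
with $g>0$ analytic, normalized so that $\eta(t,1)=1$ and $\eta(t,m_i)=h_i(t)$. To obtain this, write $g(t,s)=\exp(P(t,s))$ with $P$ a polynomial in $s$ whose coefficients $q_0(t),q_1(t),q_2(t)$ are determined by these three constraints. The constraint equations read $\int_0^{m_1}g=h_1$, $\int_{m_1}^{m_2}g=h_2-h_1$ and $\int_{m_2}^1 g=1-h_2$. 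Any positive data are attainable: choose $g$ piecewise-constant-like via a convex combination of three positive analytic bumps $b_j(s)$, each concentrated mostly on one subinterval. The condition then becomes a linear system whose matrix is close to diagonal, hence invertible with positive solution because $h_1$, $h_2-h_1$ and $1-h_2$ are bounded below uniformly in $t$. The coefficients are then analytic in $t$, $\eta_x=g\ge c>0$, and $\chi=\eta^{-1}$ is analytic by the implicit function theorem, as in the proof of Theorem \ref{tm:contrajh}.

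With $\chi$ in hand, we set $z(t,x)=y(t,\eta(t,x))$, the analogue of the earlier change of variables. This $z$ solves \eqref{eq:zedtilde} with analytic coefficients $\tilde a,\tilde b,\tilde c$ and $\tilde a\ge a_0 c^{2}>0$, where the constant $c$ in this bound comes from the uniform lower bound on $\eta_x$ (equivalently, on $\chi_x^{-1}$). Its boundary data are $z(\cdot,0)=v_0$ and $z(\cdot,1)=v_L$, its initial datum is zero, and $z(t,m_i)=y(t,h_i(t))$. Item \ref{it:twotargetgen} of Theorem \ref{tm:interncontrgencon} then provides controls achieving the tracking estimate with $\eps/2$ per component, which gives the claim. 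The main obstacle is the construction of the diffeomorphism: guaranteeing global positivity of $\eta_x$ while keeping analyticity and the interpolation constraints. The bump-basis linear system above is the first thing to try, with the near-diagonal dominance argument providing invertibility and positivity of the coefficients.
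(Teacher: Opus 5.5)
Your proposal is correct, but it builds the diffeomorphism by a genuinely different route from the paper.

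\textbf{The paper's route.} The paper first freezes $h_1$ at a constant $k$ using the one-target map of Theorem~\ref{tm:contrajh}. It then takes as forward map exactly the polynomial ansatz you set aside, $\chi=\alpha(t)x^n+\beta(t)x^r+\gamma(t)x$. It never checks $\chi_x>0$ directly. Instead it proves $\alpha,\beta,\gamma>0$ by Cramer's rule, showing that the main determinant and all three numerators are negative. This requires a carefully ordered choice of parameters: first the ratio $\tilde k/\tilde m$, then $r$, then $\tilde m$ small, then $n$ large. The conclusion is $\chi_x\ge\min\gamma>0$.

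\textbf{Your route.} You build the inverse map $\eta=\int_0^x g$ with $g(t,s)=\sum_j c_j(t)b_j(s)$. The interpolation conditions become a linear system $Bc(t)=d(t)$, where $d=(h_1,h_2-h_1,1-h_2)^T$ and $B$ is constant and nearly diagonal. Hence $c(t)$ is affine in $(h_1,h_2)$, so analyticity in $t$ is immediate. Uniform positivity follows from $\rho:=\min_t\min_i d_i(t)>0$.

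\textbf{What each buys.} Your approach needs no preliminary straightening of $h_1$ and no asymptotic sign analysis. The new coefficients can be written directly through $\eta$ (e.g.\ $\tilde a=a(t,\eta)/\eta_x^2$), so you do not even need analyticity of $\chi$. It also extends verbatim to $N$ curves with arbitrary fixed images. The paper's route, in exchange, gives a fully explicit polynomial change of variables.

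\textbf{Points to tighten.}
First, drop the $\exp(P)$ ansatz. It leads to a nonlinear system you never solve; the bump construction is what carries the argument. Also, that combination is positive, not convex.

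Second, exhibit the bumps. For instance, take $b_j(s)=\sigma(K(s-a_{j-1}))-\sigma(K(s-a_j))$ with $\sigma(u)=(1+e^{-u})^{-1}$, $(a_0,\dots,a_3)=(0,m_1,m_2,1)$ and $K$ large. Normalize $\int_{I_j}b_j=1$ on $I_j=(a_{j-1},a_j)$, and let $\kappa$ bound the off-diagonal entries of $B$. Then $\|c-d\|_\infty\le 2\kappa/(1-2\kappa)<\rho$ once $\kappa$ is small.

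Third, the lower bound for $\tilde a=a(t,\eta)/\eta_x^2$ comes from the upper bound $\eta_x\le\max g$, not from the lower bound on $\eta_x$ as you wrote. The lower bound is what gives invertibility of $\eta(t,\cdot)$ and analyticity of $\tilde b$. Both bounds hold by compactness, so this is only a slip in the stated constant.
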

\begin{proof}
Because of the change of variables defined in Theorem \ref{tm:contrajh}, we may assume that $L=1$, and that $h_1=k$ for some $k>0$. For the sake of simplifying the notation, we denote $h_2$ by $h$. Also, we denote $$\displaystyle{m=\min_{t\in [0,T]}h(t)},$$ which satisfies $m>k$. 
Again, we expect to reduce the problem to the setting of Theorem \ref{tm:interncontrgencon} by means of an appropriate change of variables:
\[\chi(t,x)=\alpha(t)x^{n}+\beta(t)x^{r}+\gamma(t)x.
\]
Here $n$ and $r$ are sufficiently large constants to be defined later on. 
We are going to find $\alpha$, $\beta$, $\gamma$, $n$ and $r$ such that:
\begin{itemize}
    \item $\chi(t,0)=0$,
    \item $\chi(t,k)=\tilde k$,
     \item $\chi(t,h(t))=\tilde m$,
    \item $\chi(t,1)=1$,
     \item There is $c>0$ such that   $\chi_x(t,x)>c$ 
    for all $t\in[0,T]$ and $x\in[0,L]$. 
\end{itemize}
The values $\tilde k$ and $\tilde m$ are small enough constants which will be determined later on.  
For that, we need to solve the following linear system for all $t\in[0,T]$:
\begin{equation*}
    \begin{cases}
        \alpha(t)+\beta(t)+\gamma(t)=1,\\
        \alpha(t)k^{n}+\beta(t)k^r+\gamma(t)k=\tilde k,\\
        \alpha(t)(h(t))^{n}+\beta(t)(h(t))^r+\gamma(t)h(t)=\tilde m.
    \end{cases}
\end{equation*}
Again, the values $r$ and $n$ will be determined later on, with $n$ considerably larger than $r$. The selection of the parameters will be done at the end of the proof, showing that there is no circular fallacy.

In order to solve the system, we are going to use the Cramer's rule. First, note that:
\begin{equation}\label{eq:maindet}
\begin{split}
\begin{vmatrix}
1&1&1\\k^{n}&k^{r}&k\\(h(t))^{n}&(h(t))^{r}&h(t)
\end{vmatrix}
&=kh(t)(k^{r-1}-(h(t))^{r-1})+O(k^n+(h(t))^n).
\end{split}
\end{equation}
Considering that $h(t)>k$, as $k=h_1<h_2=h$, this determinant is negative for $n$ large enough.
Let us now obtain the coefficients in the numerators of the Cramer formula:
\begin{itemize}
\item Let us first start by obtaining the determinant corresponding to the coefficient of $\alpha(t)$:
\begin{equation}\label{eq:alphaneg}
\begin{split}
\begin{vmatrix}
1&1&1\\\tilde k&k^r&k\\\tilde m&(h(t))^{r}&h(t)
\end{vmatrix}
=kh(t)(k^{r-1}-(h(t))^{r-1})+O(\tilde k+\tilde m),
\end{split}
\end{equation}
which is negative  as long as $\tilde k$
and $\tilde m$ are sufficiently small depending on $r$, as $\min_{t\in[0,T]}h(t)<K$.

\item Let us continue with the coefficient corresponding to $\beta(t)$:
\begin{equation}\label{eq:betaneg}
\begin{split}
\begin{vmatrix}
1&1&1\\k^{n}&\tilde k&k\\(h(t))^{n}&\tilde m&h(t)
\end{vmatrix}&
=\tilde kh(t)-k\tilde m+O(k^n+(h(t))^n)
,
\end{split}
\end{equation}
which is negative for $n$ large enough if:
\begin{equation}\label{est:betaneg}
\frac{\tilde k}{\tilde m}<\frac{k}{\max_{[0,T]}h(t)}.
\end{equation}

\item Let us conclude with the coefficient corresponding to $\gamma(t)$:
\begin{equation}\label{eq:gammaneg}
\begin{split}
\begin{vmatrix}
1&1&1\\k^{n}&k^r&\tilde k\\(h(t))^{n}&(h(t))^{r}&\tilde m
\end{vmatrix}
&=k^r\tilde m-\tilde k(h(t))^r+O(k^n+(h(t))^n),
\end{split}
\end{equation}
which is negative for $n$ large enough if:
\begin{equation}\label{est:gammaneg}
\frac{k^r}{(\min_{[0,T]}h(t))^r}<\frac{\tilde k}{\tilde m}.
\end{equation}
So, we need to choose $\tilde k$ and $\tilde m$ so that we have, at the same time, \eqref{est:betaneg} and \eqref{est:gammaneg}. 
\end{itemize}

Summing up, in order to show that we are not falling into a circular fallacy, we choose the parameters as follows:
\begin{itemize}
    \item First, we fix the ratio $\delta=\frac{\tilde k}{\tilde m}$ so that:
    \[\delta<\frac{k}{\max_{[0,T]}h(t)}.\]
    \item Next, we fix $r>0$ large enough so that 
    \begin{equation*}
\frac{k^r}{(\min_{[0,T]}h(t))^r}<\delta.
\end{equation*}
This can be done as $k<\min_{t\in[0,T]}h(t)$.
    \item Next, we fix $\tilde m$ small enough and fix $\tilde k=\tilde m\delta$ 
    so that the value in \eqref{eq:alphaneg} is strictly negative.
    \item Finally, we fix $n$ large enough so that the values in \eqref{eq:maindet}, \eqref{eq:betaneg} and \eqref{eq:gammaneg} are strictly negative. 
\end{itemize}
With that choice, we ensure that all the coefficients of $\chi$ are strictly positive, so, in particular: 
\[\chi_x(t,x)\geq c_{h}=\min_{s\in [0,T]}\gamma(s)>0
\qquad\forall t\in[0,T]\quad\forall x\in[0,L].
\]
Thus, with $\chi$, we may conclude applying the change of variable as in the proof of Theorem \ref{tm:contrajh}.
\end{proof}

\section{Numerical results}\label{sec:num}

This section illustrates the constructive duality procedure of Lemma~\ref{lm:interiordualgen} through a series of numerical experiments. We compute boundary controls that approximately enforce prescribed time signals at interior observation point(s), considering both fixed-point and moving-point configurations. We first present constant-coefficient heat equation examples with one boundary control and one tracked point, and then with two boundary controls and two tracked points. We also include a variable-coefficient parabolic example, as well as a moving-target experiment.

\subsection{Discretization and optimization strategy}
We consider a uniform partition of $[0,L]$ with $N_e$ subintervals and use continuous and piecewise affine $\mathbb{P}_1$ finite elements in space~\cite{zienkiewicz1977finite}. In time, we discretize by the backward Euler method with step size $\Delta t = T/N_t$, where $N_t$ denotes the number of time steps. In all the simulations of this section, we take a uniform mesh with $N_e = 200$ elements and $\Delta t = 10^{-3}$.

We first describe the discretization of the state problem \eqref{con:heat0gen}-\eqref{con:heat0genbis}. After a standard lifting of the Dirichlet boundary data and elimination of the boundary degrees of freedom, the vector $y^n$ of interior nodal values at time $t_n = n\Delta t$ satisfies a linear system of the form
\[
\left(\frac{1}{\Delta t}M + A^n\right)y^n
=
\frac{1}{\Delta t}My^{n-1} + G^n,
\qquad n=1,\dots,N_t,
\]
where $M$ is the mass matrix, $A^n$ is the matrix associated with the spatial operator $\mathcal{L}$ (assembled at time $t_n$ when the coefficients depend on time), and $G^n$ collects the contributions of the boundary controls. The state trace $y(t_n,x_i)$ is then approximated by $\mathbb{P}_1$ interpolation from the nodal values.

Since the adjoint equation \eqref{eq:dualsyspointcongen} is posed backward in time, we introduce the new variable $s=T-t$ and define
\[
q(s,x):=p(T-s,x).
\]
In this way, we solve an equivalent forward parabolic problem with homogeneous Dirichlet boundary conditions. After eliminating the boundary degrees of freedom, the vector $q^n$ of interior nodal values satisfies
\[
\left(\frac{1}{\Delta t}M + A^{*,n}\right)q^n
=
\frac{1}{\Delta t}Mq^{n-1} + \sum_{i=1}^N f_i^n\, b_i,
\qquad n=1,\dots,N_t,
\]
where $A^{*,n}$ is the matrix associated with the adjoint operator $\mathcal{L}^*$, and $b_i$ is the discrete load vector associated with the point source located at $x_i$. In the constant-coefficient heat equation examples, this reduces to the usual matrix $\frac{1}{\Delta t}M + aK$, with $K$ the stiffness matrix.

To define $b_{i}$ for a target point $x_i\in(0,L)$, we consider an arbitrary test function $\varphi\in V_h$, where $V_h$ denotes the $\mathbb{P}_1$ finite element space. The point source is introduced in weak form by
\[
\langle \delta_{x_i},\varphi\rangle_{(C^0([0,L])'\times C^0([0,L]))} = \varphi(x_i).
\]
If $\{\phi_j\}_{j=1}^{N_e+1}$ denotes the nodal basis of $V_h$, then
\[
(b_{i})_j=\langle \delta_{x_i},\phi_j\rangle_{(C^0([0,L])'\times C^0([0,L]))}=\phi_j(x_i).
\]

Hence, the vector $b_i$ is supported on the two endpoints of the element containing $x_i$. The same interpolation principle is used to evaluate the state trace at the target points.

In addition, to recover the boundary controls from the dual solution, the boundary derivatives $\partial_x p(t,0)$ and $\partial_x p(t,L)$ are approximated by one-sided finite differences based on the first and last interior finite element nodal values, in a manner consistent with the $\mathbb{P}_1$ discretization.

Finally, we discretize the functional \eqref{eq:funcminimgen} (and its two-control analogue, \eqref{eq:funcminimgen2}) on the time grid. The unknowns are the discrete vectors $(f_i^n)_{n=1}^{N_t}$, that is, $f_i\in\mathbb{R}^{N_t}$ for each tracked point, which enter the discrete adjoint dynamics through the vectors $b_i$. To ensure differentiability of the regularization term, we use the smoothed norm
\[
\|(f_1,\dots,f_N)\|_{(L^2(0,T))^N}
\approx
\sqrt{\Delta t\sum_{n=1}^{N_t}\sum_{i=1}^N |f_i^n|^2 + \delta},
\]
with $\delta>0$ small. In all the examples, we take $\delta=10^{-14}$. The minimization is carried out with a quasi-Newton method (BFGS, as implemented in \texttt{fminunc}), using an analytically assembled gradient computed through a discrete adjoint sweep. In the examples for which the backward Euler matrix is time-independent, a sparse Cholesky factorization is performed once and reused at every iteration.

\subsection{Example 1: One boundary control and one tracked point for the heat equation}

We consider system \eqref{con:heat0gen} on $(0,T)\times(0,L)$ with $L=1$, $a=1$, $b=c=0$ and $T=0.5$:
\[
\begin{cases}
y_t - y_{xx}=0, & (t,x)\in(0,T)\times(0,1),\\
y(t,0)=0,\quad y(t,1)=v(t), & t\in(0,T),\\
y(0,x)=0, & x\in(0,1).
\end{cases}
\]
We track the interior trace at $x_1=0.5$ towards a prescribed target $w\in L^2(0,T)$:
\[
y(t,x_1)\approx w(t)\quad\text{in }L^2(0,T).
\]
The target is chosen as a sinusoid,
\[
w(t)=A\sin\Big(\frac{2\pi m}{T}t\Big),
\]
with amplitude $A=1$ and $m=2$ oscillations over $[0,T]$.
The dual functional \eqref{eq:funcminimgen} is minimized with regularization parameters $\varepsilon=10^{-1}$ and $\varepsilon=10^{-2}$, producing an optimizer $f$ and therefore an adjoint state $p$.
The control is then recovered as $v(t)=\partial_x p(t,1)$ and injected into the forward solver.

Figure~\ref{fig:ex1_track} compares the achieved trace $y(\cdot,x_1)$ with the target $w$ and Figure~\ref{fig:ex1_control} displays the computed control $v$ for each parameter.
The final tracking mismatch is measured by
\[
E_1 := \|y(\cdot,x_1)-w(\cdot)\|_{L^2(0,T)}.
\]
We obtain the tracking mismatch $E_1=1.000108\times 10^{-1}$ for $\varepsilon=10^{-1}$ and $E_1=1.000862\times10^{-2}$ for $\varepsilon=10^{-2}$.

\begin{figure}[ht]
  \centering
\includegraphics[width=0.48\textwidth]{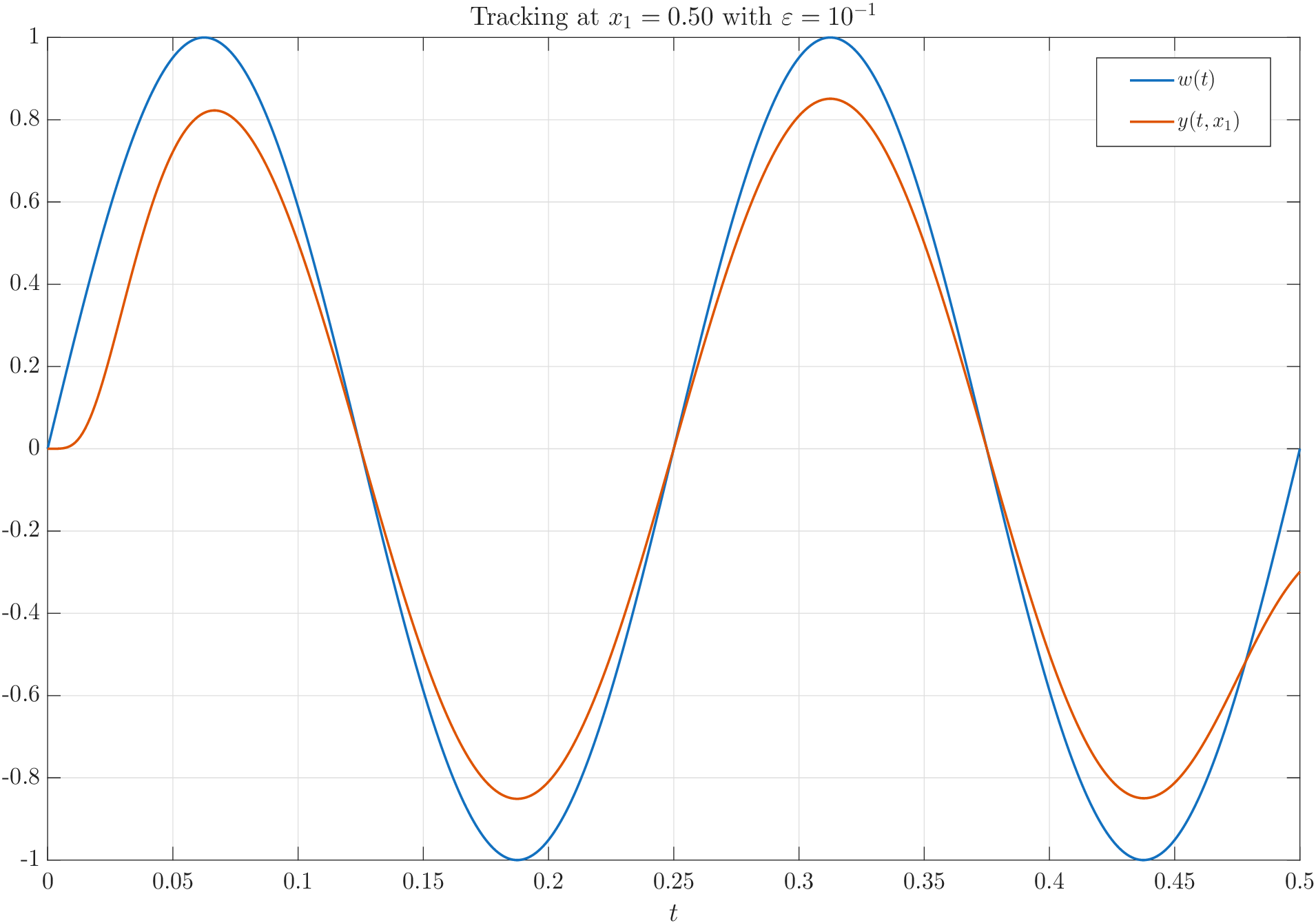}
\includegraphics[width=0.48\textwidth]{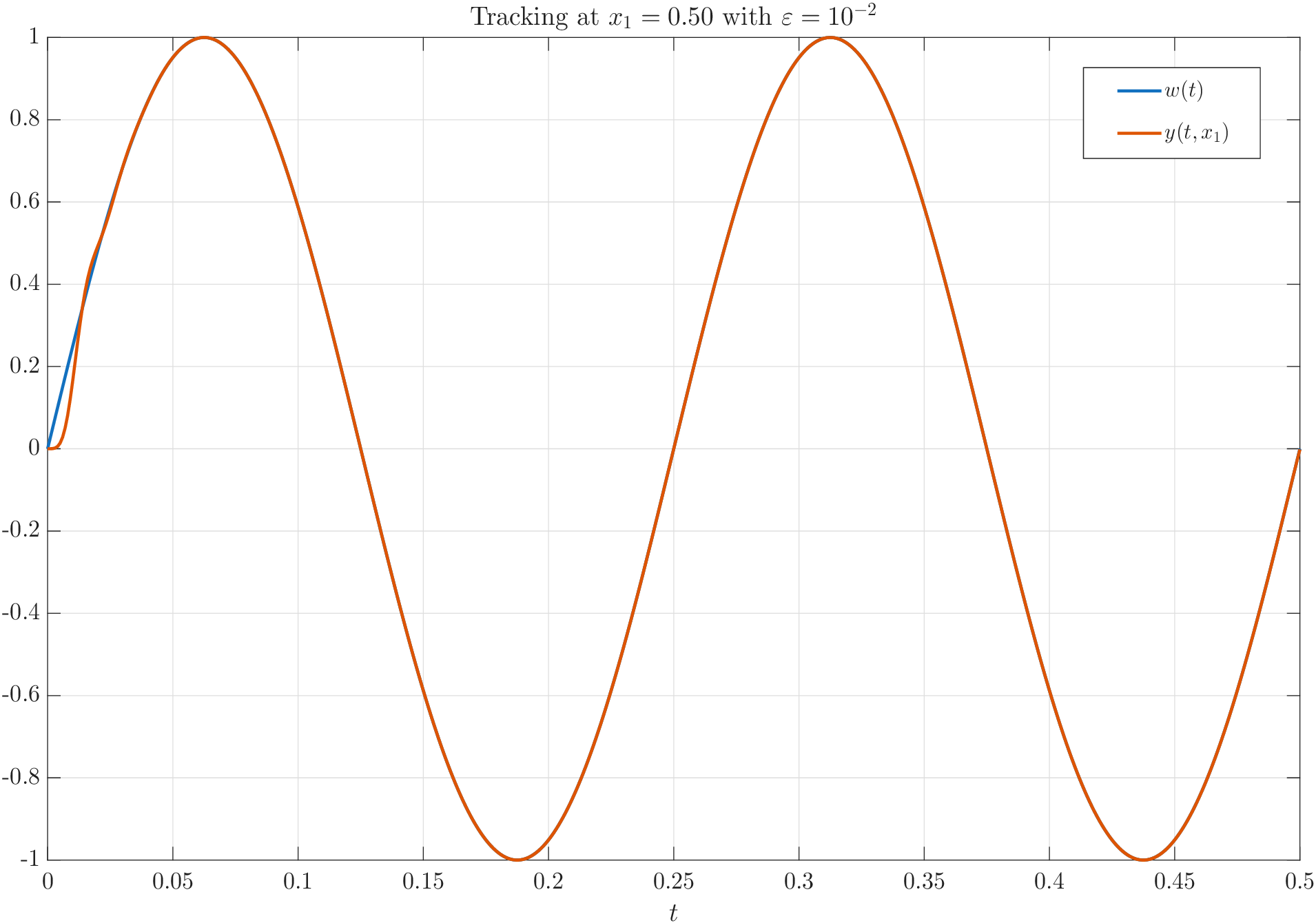}
  \caption{Target $w(t)$ and achieved trace $y(t,x_1)$ at $x_1=0.5$ for $\varepsilon=10^{-1}$ (left) and $\varepsilon=10^{-2}$ (right).}
  \label{fig:ex1_track}
\end{figure}

\begin{figure}[ht]
  \centering
\includegraphics[width=0.48\textwidth]{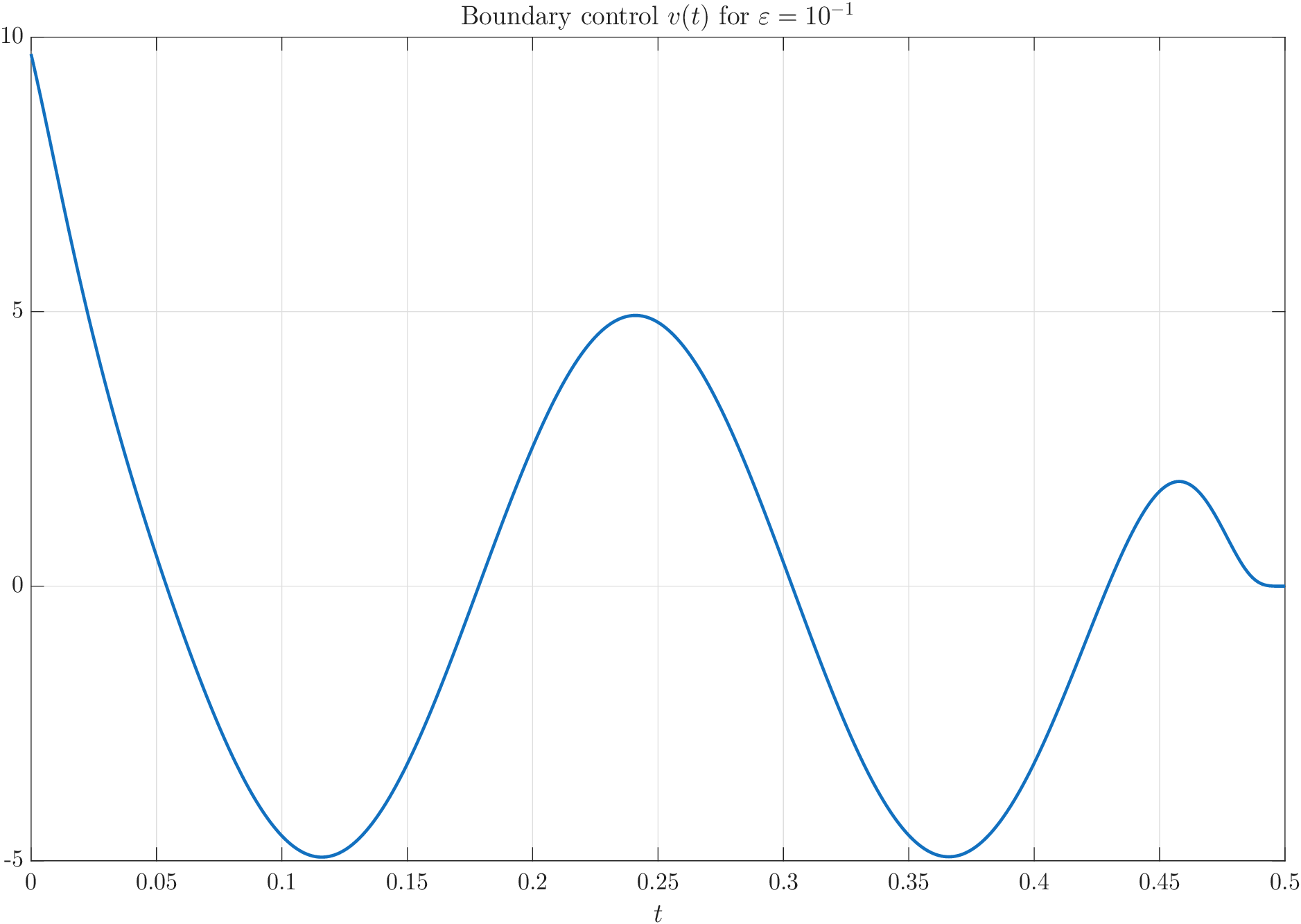}
\includegraphics[width=0.48\textwidth]{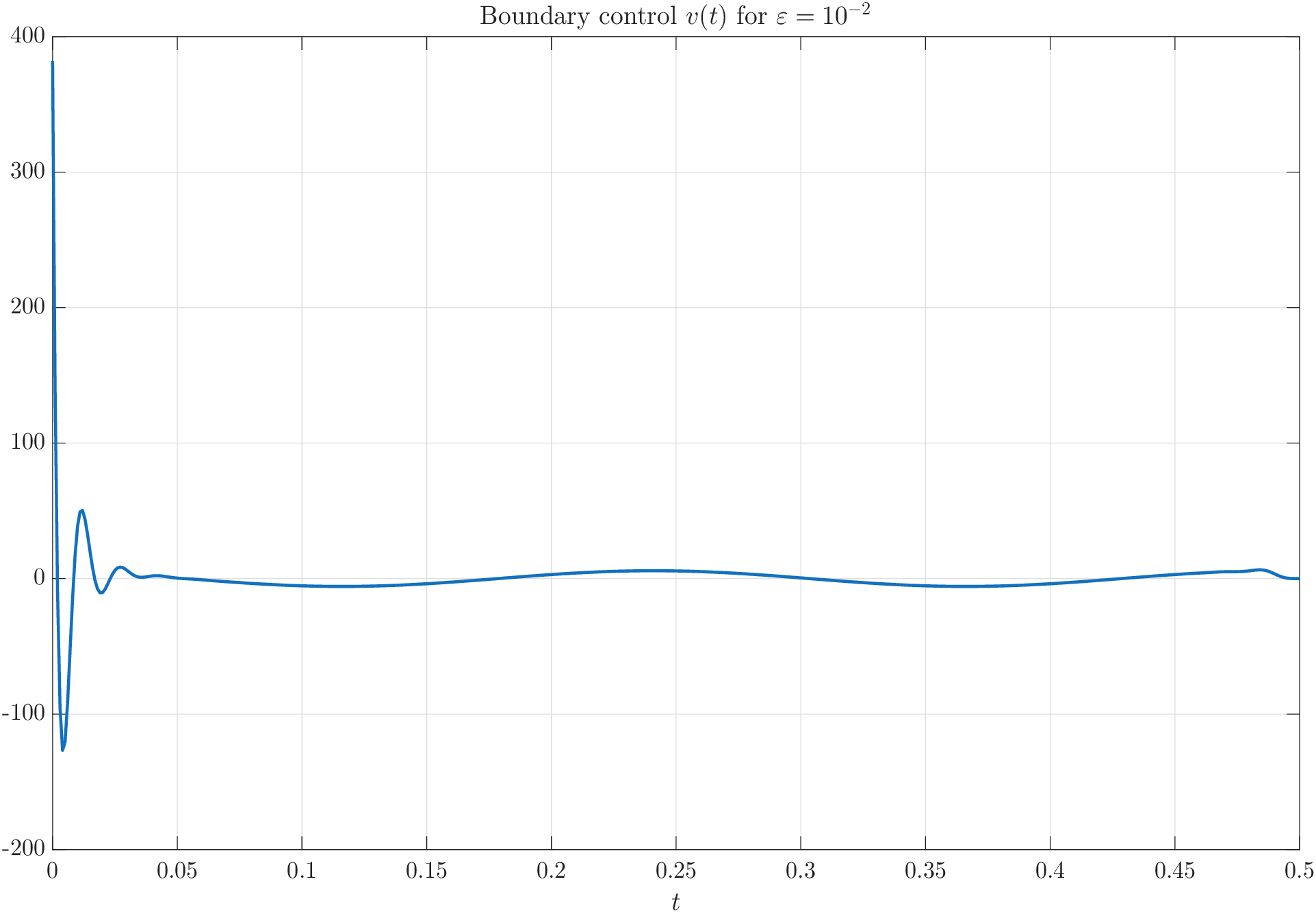}
  \caption{Boundary control $v(t)=\partial_x p(t,1)$ for $\varepsilon=10^{-1}$ (left) and $\varepsilon=10^{-2}$ (right).}
  \label{fig:ex1_control}
\end{figure}

\subsection{Example 2: Two boundary controls and two tracked points for the heat equation}

We next consider system \eqref{con:heat0genbis} with two Dirichlet controls and $T=1$:
\[
\begin{cases}
y_t - y_{xx}=0, & (t,x)\in(0,T)\times(0,1),\\
y(t,0)=v_0(t),\quad y(t,1)=v_L(t), & t\in(0,T),\\
y(0,x)=0, & x\in(0,1).
\end{cases}
\]
We aim to track two interior traces simultaneously:
\[
y(t,x_1)\approx w_1(t),\qquad y(t,x_2)\approx w_2(t),
\]
with $x_1=0.25$ and $x_2=0.5$.
Following Lemma~\ref{lm:interiordualgen} (two-control version), we introduce two dual forcing terms $f_1,f_2\in L^2(0,T)$ in \eqref{eq:dualsyspointcongen}, and minimize the corresponding dual functional featuring both boundary fluxes $\partial_x p(t,0)$ and $\partial_x p(t,1)$ with $\varepsilon=10^{-3}$.
The controls are then recovered as
\[
v_0(t)=\partial_x p(t,0),\qquad v_L(t)=\partial_x p(t,1).
\]

The targets are chosen as smooth ramp functions (with $k=1$):
\[
w_1(t)= t\big(1-e^{-kt}\big),\qquad
w_2(t)= \tfrac12\, t\big(1-e^{-kt}\big).
\]

Figure~\ref{fig:ex2_tracks} shows the comparison between the targets and the achieved traces, and Figures~\ref{fig:ex2_controls} displays the computed boundary controls.
We quantify the errors by
\[
E_1 := \|y(\cdot,x_1)-w_1(\cdot)\|_{L^2(0,T)},\qquad
E_2 := \|y(\cdot,x_2)-w_2(\cdot)\|_{L^2(0,T)}.
\]
We obtain the tracking mismatches $E_1=1.414322\times 10^{-3}$ and $E_2=3.270044\times 10^{-4}.$

\begin{figure}[ht]
  \centering
\includegraphics[width=0.48\textwidth]{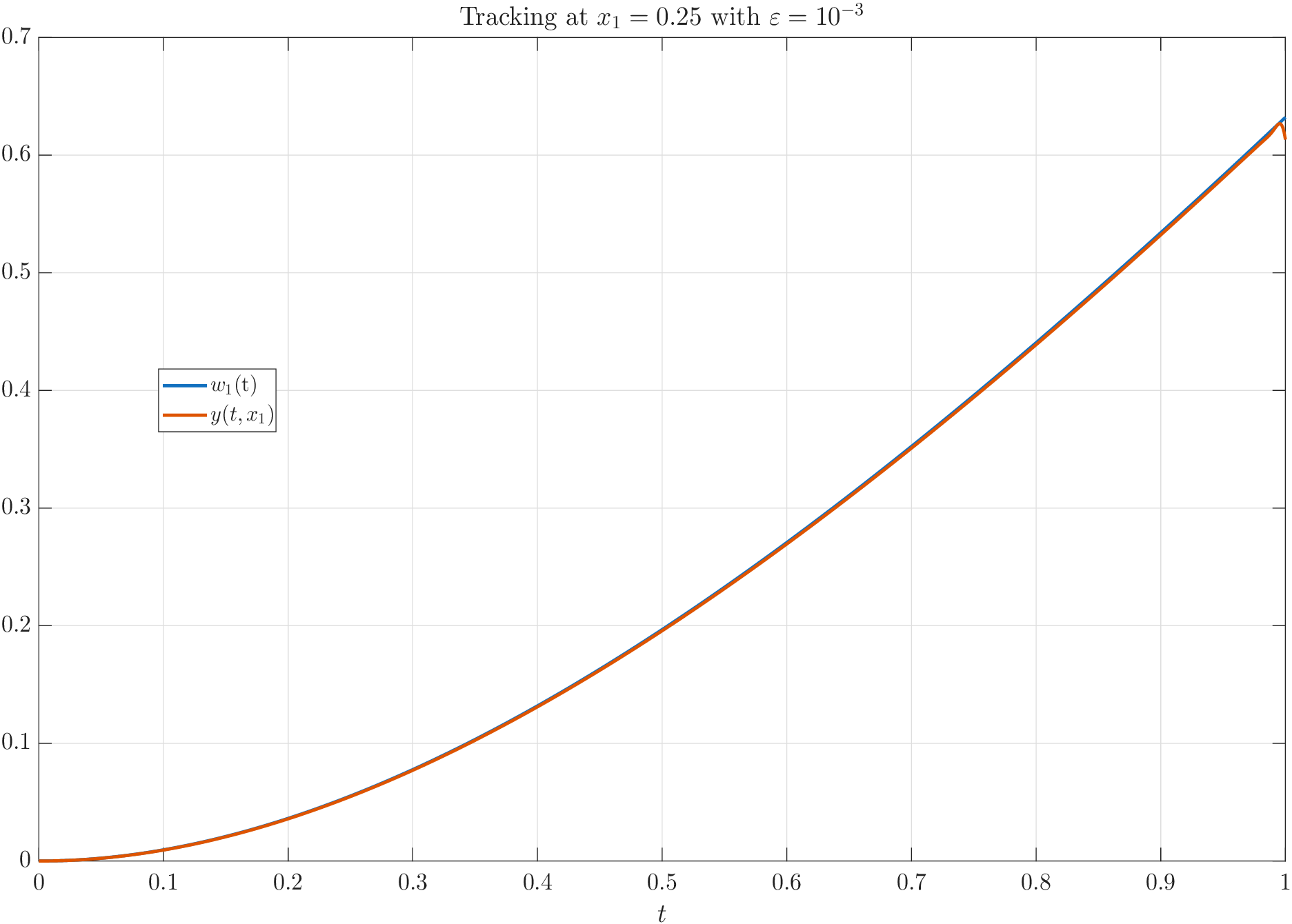}
\includegraphics[width=0.48\textwidth]{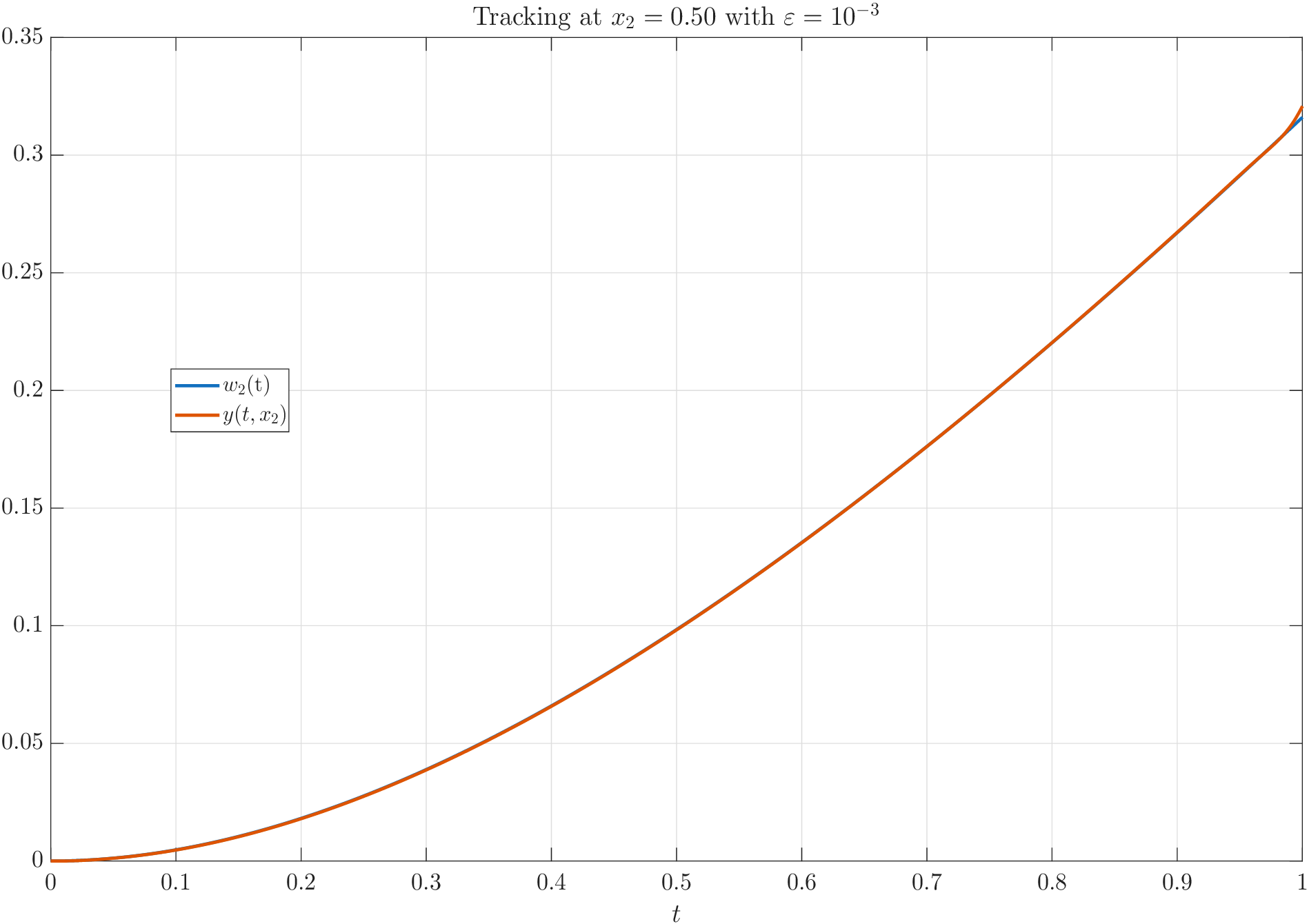}
  \caption{Target $w_1(t)$ and achieved trace $y(t,x_1)$ at $x_1=0.25$ (left), and target $w_2(t)$ and achieved trace $y(t,x_2)$ at $x_2=0.5$ (right), for $\varepsilon=10^{-3}$.}
  \label{fig:ex2_tracks}
\end{figure}

\begin{figure}[ht]
  \centering
\includegraphics[width=0.47\textwidth]{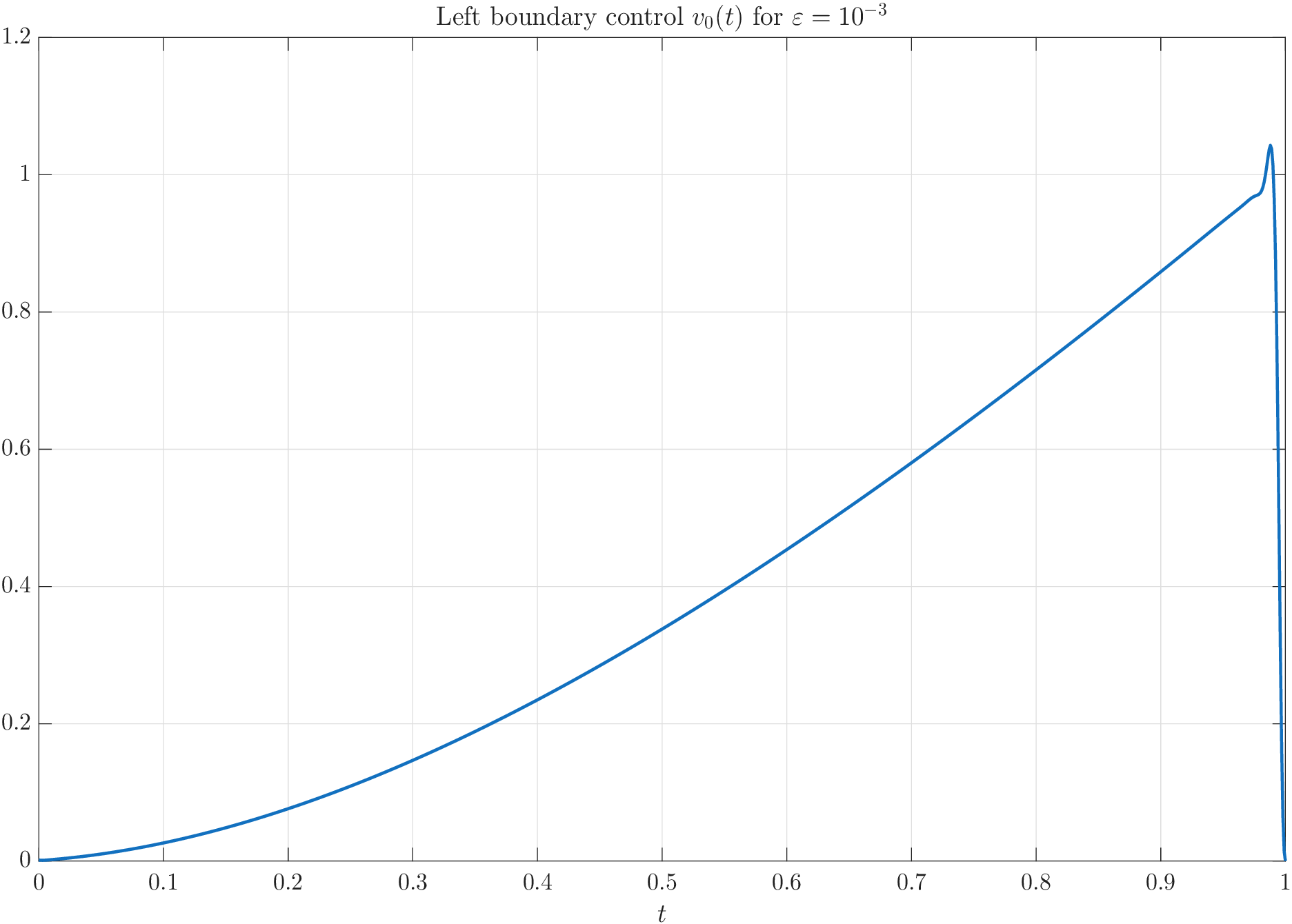}
\includegraphics[width=0.47\textwidth]{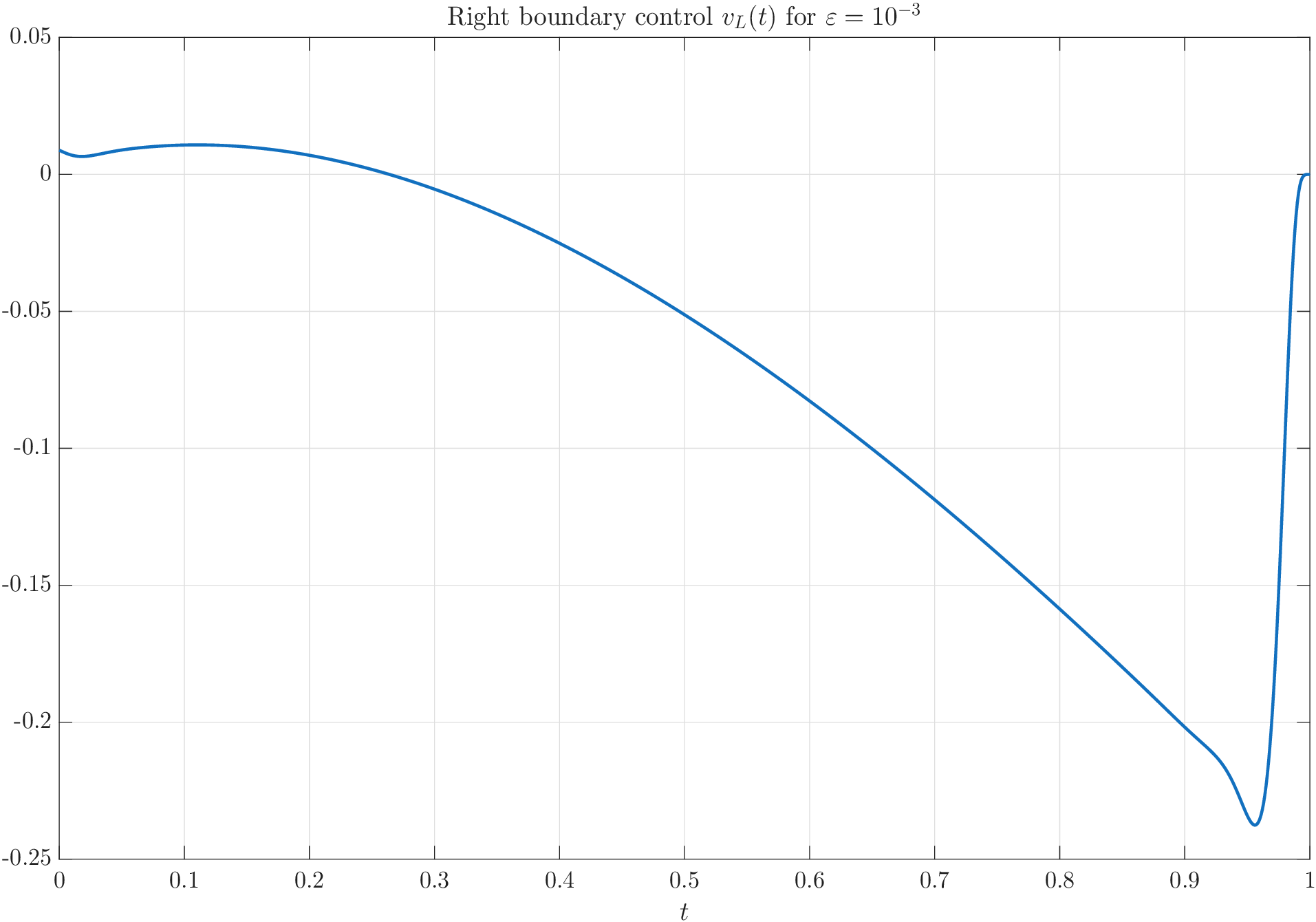}
  \caption{Boundary control $v_0(t)=\partial_x p(t,0)$ (left) and boundary control $v_L(t)=\partial_x p(t,1)$ (right), for $\varepsilon=10^{-3}$.}
  \label{fig:ex2_controls}
\end{figure}

\subsection{Example 3: One boundary control and one tracked point for variable coefficients}

We now illustrate the same duality-based procedure for a parabolic equation with nonconstant coefficients. We consider system \eqref{con:heat0gen} on $(0,T)\times(0,L)$ with $L=1$ and $T=0.5$:
\[
\begin{cases}
y_t-a(x)y_{xx}+b(x)y_x+c(t)y=0, & (t,x)\in (0,T)\times(0,1),\\[1mm]
y(t,0)=0,\qquad y(t,1)=v(t), & t\in(0,T),\\[1mm]
y(0,x)=0, & x\in(0,1).
\end{cases}
\]
The coefficients are chosen as
\[
a(x)=1+0.15\cos(\pi x),\qquad
b(x)=0.1\sin(\pi x),\qquad
c(t)=0.3(1+t).
\]
We track the interior trace at the point $x_1=0.75$ towards the target:
\[
w(t)=\sin^2\!\left(\frac{\pi t}{T}\right).
\]

At the discrete level, the second-order part is rewritten in divergence form as
\[
-a(x)y_{xx}+b(x)y_x=-(a(x)y_x)_x+\beta(x)y_x,
\qquad \beta(x)=a_x(x)+b(x),
\]
so that the corresponding weak bilinear form becomes
\[
\int_0^L a(x)\,y_x\varphi_x\,dx
+\int_0^L \beta(x)\,y_x\varphi\,dx
+\int_0^L c(t)\,y\varphi\,dx.
\]
The dual functional is minimized with regularization parameter $\varepsilon=10^{-3}$.

Unlike in the constant-coefficient case, the reaction term $c(t)$ makes the discrete parabolic operator time-dependent, so the backward Euler matrix is assembled and factorized at each time step.\\

The tracking mismatch is measured by $
E_3:=\|y(\cdot,x_1)-w(\cdot)\|_{L^2(0,T)},$ and we obtain
\[
E_3 = 9.481350\times 10^{-4}.
\]

Figure~\ref{fig:ex3_tracking_control} displays the comparison between the target $w$ and the achieved trace $y(\cdot,x_1)$, and the boundary control $v$.

\begin{figure}[ht]
  \centering
\includegraphics[width=0.47\textwidth]{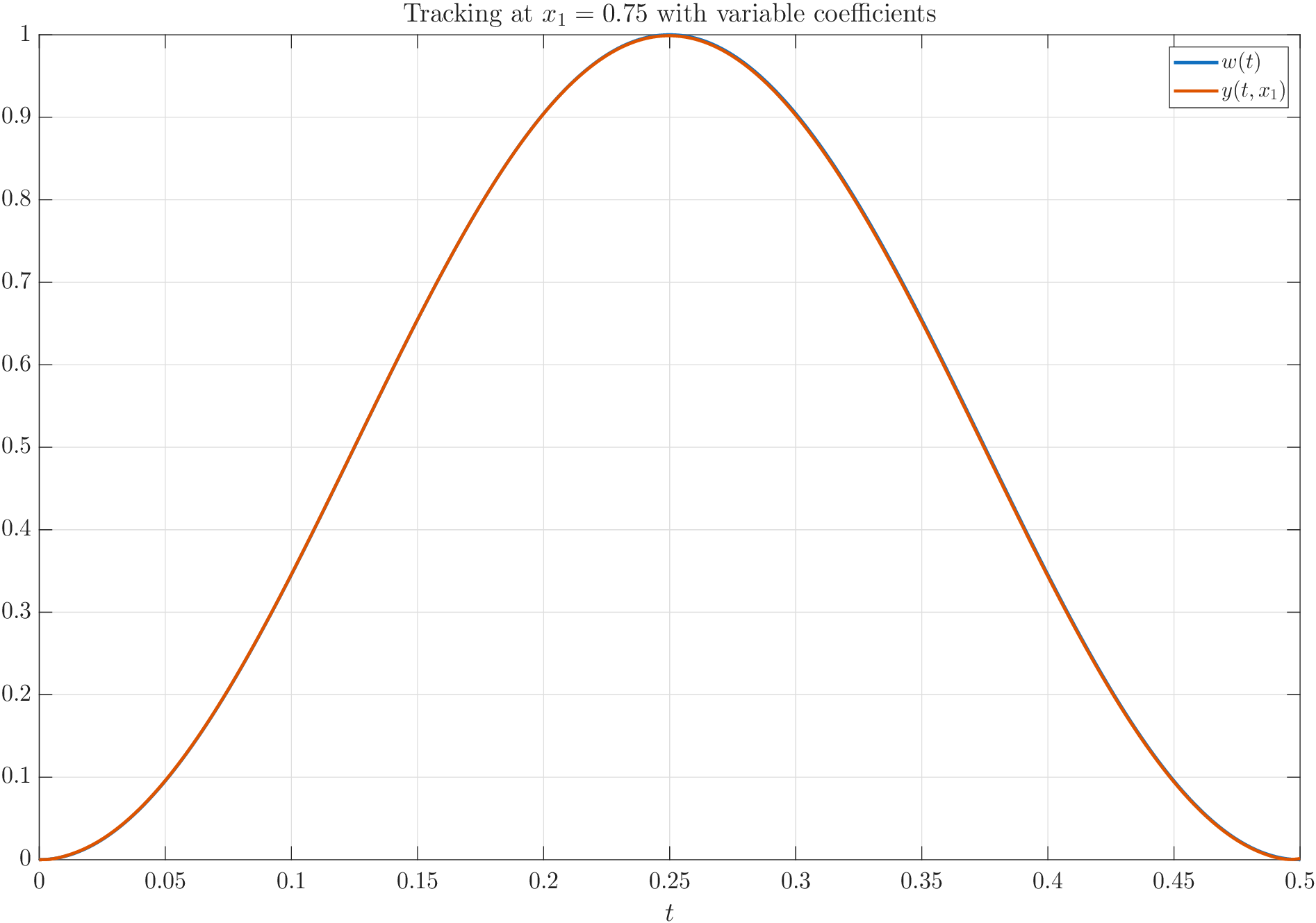}
\includegraphics[width=0.47\textwidth]{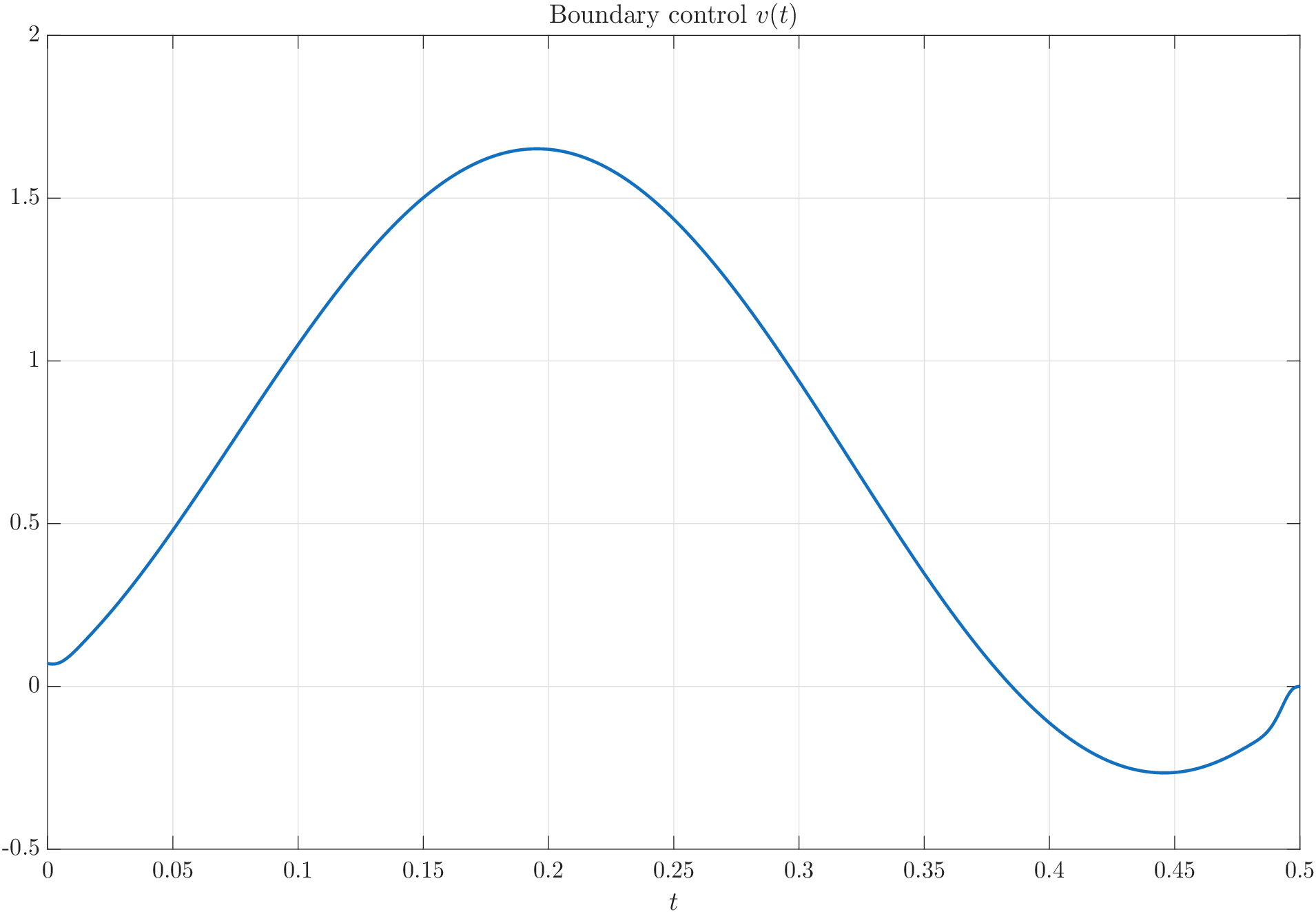}
  \caption{Target $w(t)$ and achieved trace $y(t,x_1)$ at $x_1=0.75$ (left) and boundary control $v(t)=\partial_x p(t,L)$ (right) for $\varepsilon=10^{-3}$.}
  \label{fig:ex3_tracking_control}
\end{figure}

\subsection{Example 4: One boundary control and one moving tracked point}

As a final test, we illustrate numerically the moving-target setting considered in Section~\ref{sec:mtrg}. 
More precisely, we consider system \eqref{con:heat0gen} on $(0,T)\times(0,L)$ with $L=1$, $T=0.5$, and constant coefficients $a=1$, $b=0$, and $c=0$, that is,
\[
\begin{cases}
y_t-y_{xx}=0, & (t,x)\in(0,T)\times(0,1),\\[1mm]
y(t,0)=0,\qquad y(t,1)=v(t), & t\in(0,T),\\[1mm]
y(0,x)=0, & x\in(0,1).
\end{cases}
\]
In contrast with Examples~1--3, the observation point is now time-dependent. We choose the analytic trajectory
\[
h(t)=0.5+0.15\sin\!\left(\frac{\pi t}{T}\right),
\]
which satisfies
\[
0.5\le h(t)\le 0.65\qquad \text{for all } t\in[0,T].
\]
The target function is selected as
\[
 w(t)= e^{-\frac{(t-t_0)^2}{2\sigma^2}}
\]
with $t_0 = T/2,$ and $\sigma = T/16$.
Our goal is therefore to enforce
\[
y(t,h(t))\approx w(t),\;\;\text{in }L^2(0,T).
\]

The dual minimization is carried out exactly as in Example~1, with regularization parameter $\varepsilon=10^{-3}$. The only modification is that the state trace is no longer evaluated at a fixed point $x_1$, but at the moving location $h(t_n)$ at each time step, using the same $\mathbb{P}_1$ interpolation procedure as in the fixed-point case.

The tracking error is measured by
$E_4:=\|y(\cdot,h(\cdot))-w(\cdot)\|_{L^2(0,T)}$, and we obtain
$$E_4=1.000641\times 10^{-3}.$$
In the computations, we observe that the achieved trace $y(t,h(t))$ follows the prescribed target $w(t)$ accurately, confirming at the numerical level the moving-point controllability result proved in Theorem \ref{tm:contrajh}.

Figure~\ref{fig:ex4_tracking_control} compares the target $w(t)$ with the computed trace $y(t,h(t))$ and shows the corresponding boundary control $v$. Finally, Figure~\ref{fig:ex4_state_adjoint} shows the space-time distributions of the state $y(x,t)$ and the adjoint $p(x,t)$. The dashed white curve indicates the moving trajectory $x=h(t)$.

\begin{figure}[ht]
  \centering
\includegraphics[width=0.47\textwidth]{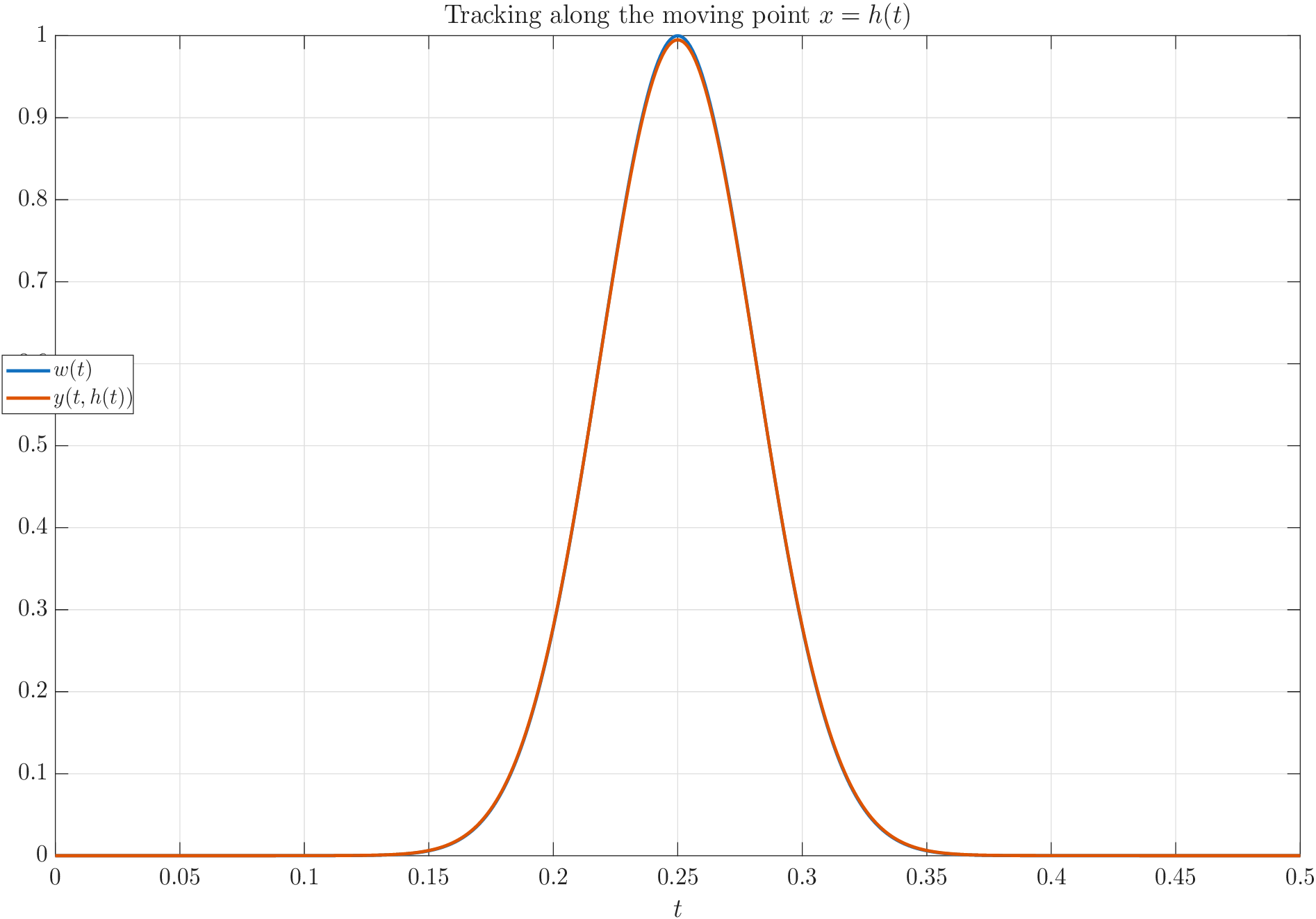}
\includegraphics[width=0.47\textwidth]{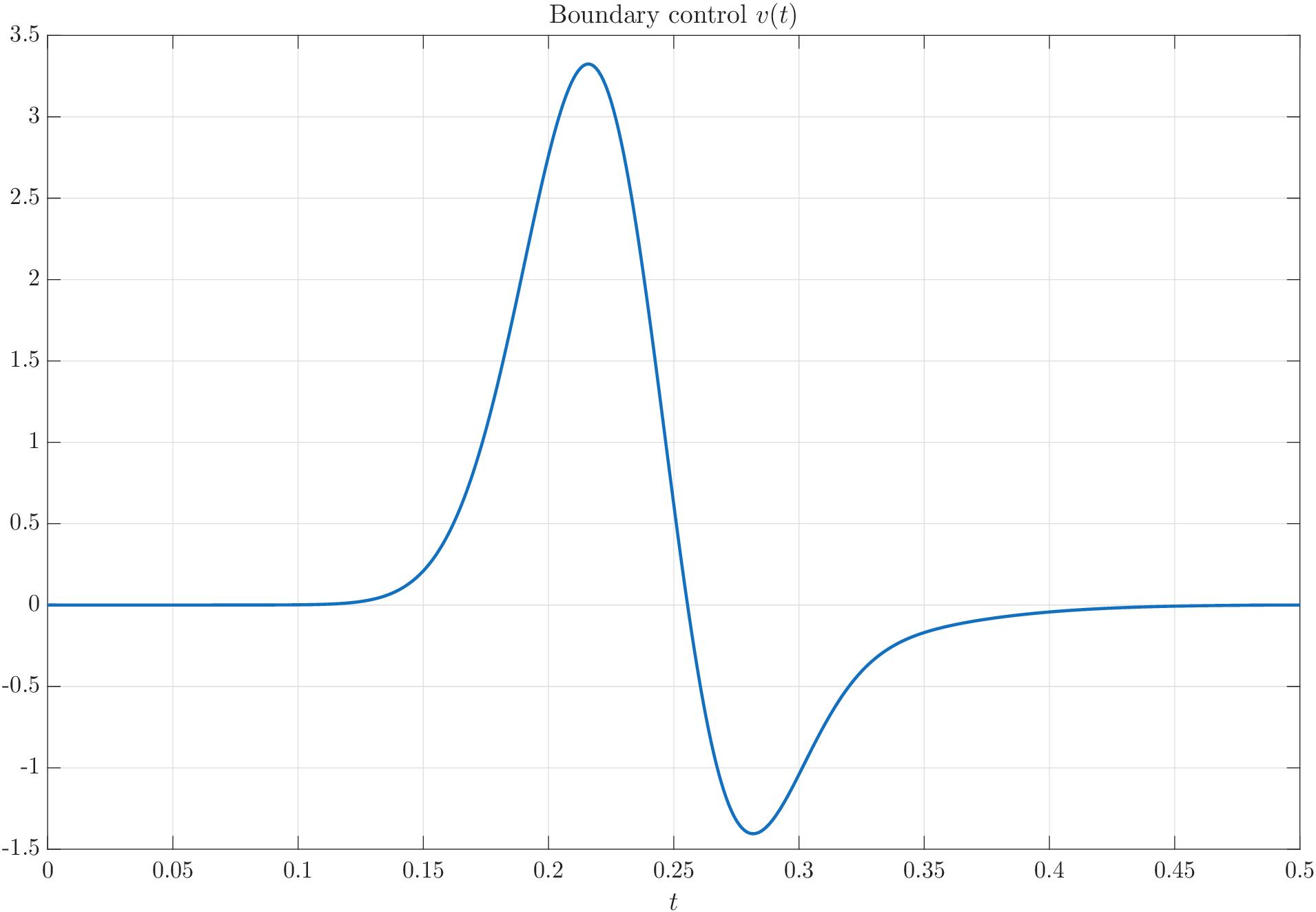}
  \caption{Target $w(t)$ and achieved trace $y(t,h(t))$ (left) and boundary control $v(t)$ (right), for $\varepsilon=10^{-3}$.}
  \label{fig:ex4_tracking_control}
\end{figure}

\begin{figure}[ht]
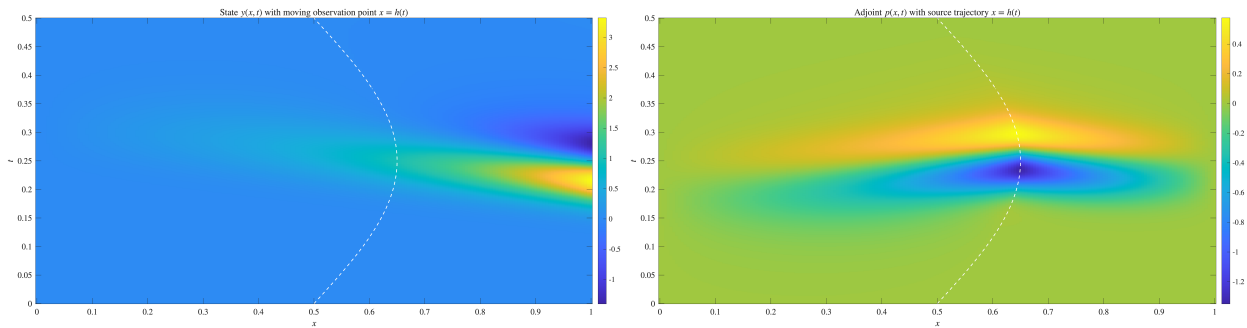

  \centering
\includegraphics[width=0.48\textwidth]{Example4_state.png}
\includegraphics[width=0.48\textwidth]{Example4_adjoint.png}
  \caption{Space-time plots of the state $y(x,t)$ (left), the adjoint $p(x,t)$ (right), and the moving trajectory $x=h(t)$, for $\varepsilon=10^{-3}$.}
  \label{fig:ex4_state_adjoint}
\end{figure}

\section{Open problems and possible extensions}\label{sec:op}
In this section, we would like to provide some additional remarks and propose some relevant open problems:
\begin{itemize}
\item \textbf{Analytic form of controls.} Under very specific setting, we can provide explicit formulas for the controls. This can be done with the flatness approach (see \cite{fliess1995flatness} or \cite{Levine2009}, for example, for the explanation and references of this method in Control Theory). Consider the following system:
\begin{equation}\label{con:heat0bis}
\begin{cases}
y_{t}-\partial_{xx} y=0 & \mbox{ in } (0,T)\times(0,L),\\
y(\cdot,0)=v_0 &\mbox{ on } (0,T),\\
y(\cdot,L)=v_L& \mbox{ on } (0,T),\\
y(0,\cdot)=0 & \mbox{ on }(0,L).
\end{cases}
\end{equation} 
If $s\in(1,2)$
and $w_1$, $w_2$ are two Gevrey functions of order $s$ (for the definition and a first application of this functions to controllability of parabolic equations, see \cite{laroche1998motion} or \cite{laroche2000motion}) that annihilate  at $t=0$, then there are $v_0$ and $v_L$
Gevrey functions of order $s$  such that 
the solution of \eqref{con:heat0bis} satisfies:
\begin{equation}\label{eq:targsydxy}
y(\cdot,x_1)=w_1 \ \mbox{ and } \  \partial_x y(\cdot,x_1)=w_2.
\end{equation}
In fact, by using the flatness approach and following the procedure used in \cite{laroche2000motion}, if we apply the controls:
\begin{equation*}
\begin{split}
&v_0(t):= \sum_{i\geq0}\frac{w_1^{(i)}(t)}{(2i)!}x_1^{2i} 
+\sum_{i\geq0}\frac{w_2^{(i)}(t)}{(2i+1)!}(-x_1)^{2i+1}, \\
&v_L(t):= \sum_{i\geq0}\frac{w_1^{(i)}(t)}{(2i)!}(L-x_1)^{2i} 
+\sum_{i\geq0}\frac{w_2^{(i)}(t)}{(2i+1)!}(L-x_1)^{2i+1},
\end{split}
\end{equation*}
 the solution of \eqref{con:heat0bis} is given by:
\begin{equation}\label{eq:yw1w2Gev}
y(t,x)=\sum_{i\geq0}\frac{w_1^{(i)}(t)}{(2i)!}(x-x_1)^{2i} 
+\sum_{i\geq0}\frac{w_2^{(i)}(t)}{(2i+1)!}(x-x_1)^{2i+1}.
\end{equation}
The solution \eqref{eq:yw1w2Gev} of \eqref{con:heat0bis} is well-posed by applying \cite[Proposition 1]{martin2014null}, where they use Stirling asymptotic formula. 
Moreover, we clearly have \eqref{eq:targsydxy}. 
In addition, we can easily prove that $y$ satisfies \eqref{con:heat0bis}  (the first three equations are straightforward, and the last one uses that $w_1$ and $w_2$ annihilate on $t=0$). However, this technique is very specific and cannot be generalized to arbitrary parabolic equations in which $a$, $b$ and $c$ on equation \eqref{con:heat0genbis} depend on the time variable. 

In addition, it is not evident how this technique can be adapted either when there is one control or when we are controlling the solution at two points, even with constant coefficients. In this setting, developing new techniques for analytic controls, remains open. 

\item \textbf{Higher dimensions.} Analyzing analogue problems in higher dimensions is far from trivial. Let us consider $\Omega$ a domain in $\mathbb R^d$, for $d\in\mathbb{N}$ and $d\geq 2$. First of all, we would need to determine the proper formulation: would it  be a matter of controlling the solution in specific curves, or would it be a matter of controlling the solution in manifolds of dimension $d-1$ that change over time? In addition, in order to answer those questions, one would need to develop groundbreaking techniques.

\item \textbf{Controllability with random diffusion.} An interesting problem that has been kept out of the scope of this paper is the tracking controllability of the heat equation (or a general parabolic one, similar to those we have discussed in this paper) with random diffusion. In fact, it is known that one might control the average of the heat equation with a random control whenever the probability of the diffusion of being small is almost null (see \cite{lu2016averaged}, \cite{coulson2019average}, and \cite{barcena2021averaged}). Thus, it seems reasonable that we can also control the average of the traces under that setting. The problem, though, remains open.

\item \textbf{Fluid-structure problem.} Controlling the trajectory of structures surrounded by fluid is a problem of high relevance. In order to tackle such problems, a first approach can be to control the trajectory of the punctual mass in fluid-structure problem with a punctual mass, as the system studied in \cite{liu2013single}. For that, new techniques must be developed. 

\item \textbf{E-tracking controllability.} A closely related problem is $E$-tracking controllability, where instead of controlling punctual values, we may seek to control weighted averages on a certain domain. This notion was introduced and developed in  \cite{danhane2025averaged} for abstract systems.
This should not be confused with the averaged tracking controllability, instead of having unknown dynamics, here we may control the averages of the state. A further understanding of this problem remains open. 
\end{itemize}

\color{black}

\appendix

\section{Analytic functions}

First of all, let us recall the definition of analytic function on a closed rectangle:

\begin{definition}\label{def:analytic}
   Let $F:[0,T]\times[0,L]\to\mathbb R$. We say that $F$ is analytic if for every point $(t_0,x_0)\in [0,T]\times[0,L]$, there exist a radius $r>0$ 
   and a sequence of coefficients $(a_{n,m})_{n,m=0}^\infty\subset\mathbb R$
   such that the power series
   \[
   \sum_{n,m\geq0}a_{n,m}(t-t_0)^n(x-x_0)^m
   \]
   converges in $B((t_0,x_0),r)$ and coincides with $F(t,x)$ on $B((t_0,x_0),r)\cap \big([0,T]\times[0,L]\big)$.
\end{definition}

It is a well-known fact that analytic functions defined on a compact domain can be extended slightly beyond its boundary:

\begin{proposition}\label{prop:analytic}
    Let $F:[0,T]\times[0,L]\to\mathbb R$ be an analytic function. Then, for $\eps$ small enough, $F$ can be extended to an analytic function on $[-\eps,T+\eps]\times[-\eps,L+\eps]$.     
\end{proposition}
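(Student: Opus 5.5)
The plan is to use the local power-series characterization in Definition \ref{def:analytic} together with compactness of $[0,T]\times[0,L]$ and the identity theorem for real analytic functions. For every $(t_0,x_0)\in[0,T]\times[0,L]$ the definition gives a radius $r(t_0,x_0)>0$ and a convergent power series $P_{(t_0,x_0)}$ on the full ball $B((t_0,x_0),r(t_0,x_0))\subset\mathbb R^2$ that agrees with $F$ on the part of the ball inside the rectangle. Each $P_{(t_0,x_0)}$ is then a real analytic function on its whole ball, not only on the rectangle. I would therefore define the extension near each point by $P_{(t_0,x_0)}$.

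\textbf{Step 1: finite cover and a uniform width.} By compactness, finitely many half-balls $B((t_j,x_j),r_j/2)$, $j=1,\dots,K$, cover $[0,T]\times[0,L]$. Let $\rho>0$ be a Lebesgue-type number for this cover: any point within distance $\rho$ of the rectangle lies in some $B((t_j,x_j),r_j)$, and moreover its $\rho$-ball meets the rectangle inside that same ball. Choosing $\rho<\min_j r_j/2$ is enough. Then fix $\eps<\rho/\sqrt2$, so that $[-\eps,T+\eps]\times[-\eps,L+\eps]$ lies in the $\rho$-neighbourhood of the rectangle.

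\textbf{Step 2: consistency of the local extensions.} This is the main obstacle. If $z$ lies in $B_j\cap B_k$, with $B_j=B((t_j,x_j),r_j)$, we need $P_j(z)=P_k(z)$. Both functions are analytic on the connected convex set $B_j\cap B_k$. They coincide with $F$ on $B_j\cap B_k\cap([0,T]\times[0,L])$. Whenever this set has nonempty interior, the identity theorem gives $P_j=P_k$ on all of $B_j\cap B_k$. The delicate case is when the intersection meets the rectangle only in a thin or degenerate set, or not at all.

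To avoid this, I would not glue on the raw intersections. Instead, I would define $\tilde F(z):=P_j(z)$ for any $j$ such that $\operatorname{dist}(z,(t_j,x_j))<r_j/2$, restricted to points $z$ with $\operatorname{dist}(z,\text{rectangle})<\rho$. If $j$ and $k$ are both admissible for $z$, take the nearest point $z^*$ of the rectangle. Then $|z-z^*|<\rho$, and a small ball around $z^*$ intersected with the rectangle has nonempty interior because the rectangle is convex with nonempty interior. This ball lies in both $B_j$ and $B_k$, since $\rho<r_j/2,\,r_k/2$. Hence $P_j=P_k$ on an open set, and so on the convex set $B_j\cap B_k$, which contains $z$. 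So $\tilde F$ is well defined.

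\textbf{Step 3: conclusion.} Near every point of $[-\eps,T+\eps]\times[-\eps,L+\eps]$, the function $\tilde F$ coincides with some $P_j$ on an open ball, so it is real analytic there. It also agrees with $F$ on the original rectangle. Since it is analytic at every point of the closed enlarged rectangle, it satisfies Definition \ref{def:analytic} on $[-\eps,T+\eps]\times[-\eps,L+\eps]$.
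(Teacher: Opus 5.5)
Your overall strategy (local power series on full balls, a finite subcover by half-balls, the identity theorem, and a uniform collar of width $\rho$) is the same as the paper's. However, Step 2 as written does not define $\tilde F$ on the whole set you need.

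You call $j$ admissible for $z$ only if $|z-p_j|<r_j/2$, with $p_j=(t_j,x_j)$. Step 1, by contrast, only shows that a point within $\rho$ of $K=[0,T]\times[0,L]$ lies in some \emph{full} ball $B_j$. The condition $\rho<\min_j r_j/2$ does not force the half-balls to cover the $\rho$-neighbourhood of $K$, because they may cover a corner of $K$ with an arbitrarily small margin. For instance, if $F$ is a polynomial, a single half-ball centred at $(T/2,L/2)$ with radius only slightly larger than $\frac12\sqrt{T^2+L^2}$ is a legitimate subcover. Then $\rho$, and hence $\eps$, may be taken of order $\frac12\sqrt{T^2+L^2}$, and the corner $(-\eps,-\eps)$ of the enlarged rectangle lies outside that half-ball. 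At that point no index is admissible, $\tilde F$ is undefined, and Step 3 breaks down.

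The repair is easy. Additionally require $\rho\le\operatorname{dist}(K,\mathbb R^2\setminus\bigcup_j B(p_j,r_j/2))$, which is positive by compactness. Alternatively, declare $j$ admissible when the nearest point $z^*$ lies in $B(p_j,r_j/2)$. With either choice, your consistency argument through $z^*$ and your Step 3 go through unchanged.

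You could also skip the detour entirely, because the ``delicate case'' you worry about cannot occur. The centres $p_j,p_k$ lie in the convex set $K$. So if $B_j\cap B_k\neq\emptyset$, then $|p_j-p_k|<r_j+r_k$, and the segment $[p_j,p_k]\subset K$ meets $B_j\cap B_k$. An open set meeting a convex set with nonempty interior meets its interior, so $P_j=P_k$ on a nonempty open set, and hence on the convex set $B_j\cap B_k$. This is exactly how the paper glues: it defines the extension on $U=\bigcup_j B_j$ directly and then takes $\eps<\operatorname{dist}(K,U^c)/\sqrt2$.
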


\begin{proof}

Set $K:=[0,T]\times[0,L]$. By definition, for each $p=(t_0,x_0)\in K$, there exist $r_p>0$ and a power series
\[
\sum_{n,m\ge0} a_{n,m}^{(p)}(t-t_0)^n(x-x_0)^m
\]
converging on $B(p,r_p)$ and representing $F$ on $B(p,r_p)\cap K$.

Define
\[
F_p(t,x)
:=
\sum_{n,m\ge0}
a_{n,m}^{(p)}(t-t_0)^n(x-x_0)^m,
\qquad (t,x)\in B(p,r_p).
\]
Then $F_p$ is analytic on $B(p,r_p)$ and extends $F$ locally around $p$.

We claim that these local extensions agree on overlaps.
Indeed, let $p,q\in K$ and suppose that
\[
B(p,r_p)\cap B(q,r_q)\neq\varnothing .
\]
 Since $K$ is a closed rectangle with non-empty interior, the set $K\cap B(p,r_p)\cap B(q,r_q)$ contains a non-empty open neighborhood in $\mathbb{R}^2$. Both $F_p$ and $F_q$ are analytic on $B(p,r_p)\cap B(q,r_q)$, and satisfy $F_p = F_q = F$ on $K\cap B(p,r_p)\cap B(q,r_q)$.

By the identity theorem for real analytic functions, $F_p = F_q $ on the entire connected open set $B(p,r_p)\cap B(q,r_q)$.

The family $\{B(p,r_p/2)\}_{p\in K}$ is an open cover of the compact
set $K$. Hence there exist finitely many points
$p_1,\ldots,p_N\in K$ such that
\[
K\subset \bigcup_{i=1}^N B(p_i,r_i/2),
\qquad r_i:=r_{p_i}.
\]
Set
\[
U:=\bigcup_{i=1}^N B(p_i,r_i).
\]
Since the local extensions agree on overlaps, they glue together to
define an analytic function
\[
\widetilde F:U\to\mathbb R
\]
satisfying
\[
\widetilde F|_K = F.
\]

Now $U$ is an open neighborhood of the compact set $K$.
Consequently,
\[
d:=\frac{\operatorname{dist}(K,U^c)}{\sqrt{2}}>0.
\]
Choose any $\varepsilon$ with $0<\varepsilon<d$. Then every point whose
Euclidean distance from $K$ is strictly less than $\sqrt{2}\varepsilon$ belongs to $U$. In
particular,
\[
[-\varepsilon,T+\varepsilon]
\times
[-\varepsilon,L+\varepsilon]
\subset U.
\]
Restricting $\widetilde F$ to this rectangle yields the desired
analytic extension of $F$.
\end{proof}

\begin{remark}\label{rk:coerc}
By continuity, if $F\geq a_0$ on $[0,T]\times[0,L]$ for some $a_0>0$, we have that for $\eps$ small enough, its extension $\widetilde{F}$  satisfies $\widetilde{F}\geq \frac{a_0}{2}$ on $[-\eps,T+\eps]\times[-\eps,L+\eps]$.
\end{remark}

\bibliographystyle{alpha}

\bibliography{Nodalheat}

\end{document}